\documentclass[a4paper,11pt]{article}

%
%

\usepackage{makeidx}
\usepackage{amscd}
\usepackage{amsmath}
\usepackage{amssymb}
\usepackage{amsthm}

\usepackage[all]{xy}
%
%

\theoremstyle{plain}
\newtheorem{thm}{Theorem}[section]

\newtheorem{lem}[thm]{Lemma}
\newtheorem{cla}[thm]{Claim}

\newtheorem{prop}[thm]{Proposition}

\newtheorem{prob}[thm]{Problem}

%
%

\theoremstyle{definition}
\newtheorem{defn}[thm]{Definition}
\newtheorem{rem}[thm]{Remark}

\newtheorem{defn-prop}[thm]{Definition-Proposition}

%
%
\newcommand{\PP}{\mathbb P}
\newcommand{\Z}{\mathbb Z}

\newcommand{\C}{\mathbb C}

\newcommand{\Ker}{\mathop{\mathrm{Ker}}\nolimits}

\newcommand{\id}{\ensuremath{\mathop{\mathrm{id}}}}


%
%

\newcommand{\Pic}{\mathop{\mathrm{Pic}}\nolimits}

\newcommand{\Lotimes}{\stackrel{\mb L}{\otimes}}


\newcommand{\SL}{\operatorname{SL}}

\newcommand{\Auteq}{\operatorname{Auteq}}
\newcommand{\module}{\operatorname{mod}}
\newcommand{\Aut}{\operatorname{Aut}}

\newcommand{\str}{\mathcal{O}} 
\newcommand{\mc}{\mathcal}
\newcommand{\mb}{\mathbb}

\newcommand{\Supp}{\ensuremath{\operatorname{Supp}}}
\renewcommand{\labelenumi}{(\roman{enumi})}

\newcommand{\ord}{\ensuremath{\operatorname{ord}}}
\newcommand{\FM}{\ensuremath{\operatorname{FM}}}

\newcommand{\Span}[1]{\left<#1\right>}

%
\title{Fourier--Mukai partners of elliptic ruled surfaces}
\author{Hokuto Uehara}
\date{}
\pagestyle{plain}
%
%


\begin{document}
\maketitle
\begin{abstract}
We study Fourier--Mukai partners of elliptic ruled surfaces. We also describe the autoequivalence group of the derived categories of 
ruled surfaces with an elliptic fibration, by using \cite{Ue15}.
\end{abstract}


\section{Introduction}
\subsection{Motivations and results}\label{subsec:motivation}
Let $X$ be a smooth projective variety over $\C$ and $D(X)$ the bounded derived category of coherent sheaves on $X$.
If $X$ and $Y$ are smooth projective varieties with equivalent derived categories, then we call
$X$ and $Y$ \emph{Fourier--Mukai partners}.
We denote by $\FM (S)$ the set of isomorphism classes of Fourier--Mukai partner of $X$:
$$
\FM (X):=\{Y\text{ smooth projective varieties }\mid 
D(X)\cong D(Y) \}/\cong.
$$
It is an interesting problem to determine the set 
$\FM (X)$ for a given $X$.
There are several known results in this direction.  
For example, Bondal and Orlov show that if $K_X$ or $-K_X$ is ample, then $X$ can be entirely reconstructed from $D(X)$, 
namely $\FM (X)=\{X\}$ (\cite{BO95}).
To the contrary, there are examples of non-isomorphic varieties $X$ and $Y$ having equivalent derived categories. 
For example, in dimension $2$, if $\FM (X)\ne \{X\}$, then $X$ is a K3 surface, an abelian surface or 
a relatively minimal elliptic surface with non-zero Kodaira dimension (\cite{BM01}, \cite{Ka02}).

By the classification of surfaces, relatively minimal elliptic surfaces with negative Kodaira dimension 
are either rational elliptic surfaces or elliptic ruled surfaces.
In \cite{Ue04, Ue11}, the author studies the set $\FM(S)$ of rational elliptic surfaces $S$.
In this paper, we describe the set $\FM(S)$ of elliptic ruled surfaces $S$:


\begin{thm}\label{thm:FMpartner}
Let $f\colon S=\PP (\mc E)\to E$ be a $\PP^1$-bundle
 over an elliptic curve $E$, and $\mc E$ be a normalized locally free sheaf of  rank $2$.
If $|\FM (S)|\ne 1$, there is a degree $0$ line bundle
$\mc L\in \hat{E}:=\Pic ^0 E$  of order $m>4$ such that $\mc E= \mc O_{E}\oplus \mc L$. 
Furthermore in this case, we have
$$
\FM (S)=\{\PP (\mc{O}_E\oplus \mc{L}^i)\mid i\in (\Z/m\Z)^*\}/\cong.
$$
This set consists of $\varphi(m)/|H_{\hat{E}}^{\mc{L}}|$ elements.
Here, $\varphi$ is the Euler function, and 
 $H_{\hat{E}}^{\mc{L}}$ is a group defined in \S \ref{subsec:technical} with $|H_{\hat{E}}^{\mc{L}}|=2,4$ or $6$, 
depending on the choice of $E$ and $\mc{L}$. 
\end{thm}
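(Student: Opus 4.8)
The plan is to translate the whole statement into the language of relative Fourier--Mukai transforms for elliptic surfaces, for which \cite{BM01} is the basic reference; the key point is that $S$, although ruled, carries a (unique) elliptic fibration.

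\emph{Step 1: locating an elliptic fibration.} Since $S=\PP(\mc E)$ is ruled over $E$, one has $h^{1,0}(S)=1$ and $\operatorname{Alb}(S)=E$; as Hodge numbers and the Albanese are invariant under derived equivalence, every $Y\in\FM(S)$ is again an elliptic ruled surface over $E$. By \cite{BM01}, \cite{Ka02} a surface with a nontrivial Fourier--Mukai partner is K3, abelian, or relatively minimal elliptic, so if $\FM(S)\ne\{S\}$ then $S$ admits an elliptic fibration $g\colon S\to B$. As $\rk\Pic S=2$ and the ruling $f$ accounts for one of the (at most two) fibration structures, $g$ is determined by the numerical structure of $S$; a computation with the two square-zero classes on $S$ then forces $B=\PP^1$, shows $g$ has exactly two multiple fibres, both of some multiplicity $m$ with reduced support a smooth elliptic curve isomorphic to $E$ (these are the two disjoint sections of $f$ with self-intersection $0$), and identifies the normalization of $\mc E$ with $\mc O_E\oplus\mc L$ for some $\mc L\in\hat E$ of order exactly $m$. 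Concretely $S\cong(\tilde E\times\PP^1)/(\Z/m\Z)$, where $\pi\colon\tilde E\to E$ is the cyclic $m$-isogeny killing $\mc L$ and $\Z/m\Z$ acts by a translation of order $m$ on $\tilde E$ and by a primitive order-$m$ rotation on $\PP^1$, with $g$ induced by $z\mapsto z^m$. If $\mc E$ is not of this form, \cite{BM01}, \cite{Ka02} give $\FM(S)=\{S\}$.

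\emph{Step 2: identifying the partners.} By \cite{BM01}, every $Y\in\FM(S)$ is isomorphic to a relative moduli space $J^a_S(b)$ of fibrewise stable sheaves of rank $a>0$ and fibre-degree $b$ on $g$, with $\gcd(a,b)=1$ (by Step 1 no second fibration can intervene). I would compute these using the description of Step 1: on $\tilde E\times\PP^1\to\PP^1$ the relative transform along $\PP^1$ is $\Phi_{\tilde E}\boxtimes\id_{\PP^1}$ for the classical Fourier--Mukai transform $\Phi_{\tilde E}$ of the self-dual elliptic curve $\tilde E$, and the fibrewise moduli there is $M_{\tilde E}(a,b)\times\PP^1\cong\operatorname{Pic}^b(\tilde E)\times\PP^1$; descending along $\Z/m\Z$ one reads off that $J^a_S(b)$ is well defined precisely when in addition $\gcd(b,m)=1$, that it is again an elliptic ruled surface of the same type over $E$ (the translation on the $\tilde E$-factor being multiplied by $b$), and hence that $J^a_S(b)\cong\PP(\mc O_E\oplus\mc L^{\,b})$, independently of $a$. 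Since every $i\in(\Z/m\Z)^*$ occurs as such a $b$, this yields
$$
\FM(S)=\{\PP(\mc O_E\oplus\mc L^i)\mid i\in(\Z/m\Z)^*\}/\cong .
$$

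\emph{Step 3: counting.} Because the two fibrations on $\PP(\mc O_E\oplus\mc L^i)$ have fibres of different genera, every isomorphism $\PP(\mc O_E\oplus\mc L^i)\to\PP(\mc O_E\oplus\mc L^j)$ preserves the rulings, hence is induced by an automorphism $\sigma$ of $E$ together with a twist by a line bundle. Translations act trivially on $\hat E$, and twisting replaces $\mc L^j$ by $\mc L^{-j}$, so such an isomorphism exists iff $\sigma^*\mc L^{\pm j}\cong\mc L^i$ for some $\sigma$ in the automorphism group of $(E,0)$ — cyclic of order $2$, $4$ or $6$ — stabilizing $\langle\mc L\rangle\subset\hat E$. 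Writing $H^{\mc L}_{\hat E}\le(\Z/m\Z)^*$ for the image of this stabilizer (it contains $-1$, absorbing the twist), the relation becomes $i\sim j\iff i\in H^{\mc L}_{\hat E}\cdot j$; as $H^{\mc L}_{\hat E}$ acts on $(\Z/m\Z)^*$ freely by multiplication, the number of classes is $\varphi(m)/|H^{\mc L}_{\hat E}|$. Checking the three cases for $\Aut(E,0)$ gives $|H^{\mc L}_{\hat E}|\in\{2,4,6\}$ as soon as $m\ge3$, and for $m\le4$ one gets $\varphi(m)/|H^{\mc L}_{\hat E}|=1$; hence $\FM(S)\ne\{S\}$ forces $m>4$, which completes the proof. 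The technical heart — and the step I expect to be the main obstacle — is Step 2: verifying that the relative transform is defined exactly when $\gcd(b,m)=1$ and that the torsion bundle attached to $J^a_S(b)$ is $\mc L^b$, i.e. controlling the behaviour of the transform at the two multiple fibres; Steps 1 and 3 are surface classification and elementary arithmetic in $(\Z/m\Z)^*$.
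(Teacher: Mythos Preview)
Your outline is correct and follows essentially the same strategy as the paper: realize $S$ as a $\Z/m\Z$-quotient of $\tilde E\times\PP^1$, invoke Bridgeland--Maciocia to reduce $\FM(S)$ to the moduli spaces $J_S(b)$, compute these by descent from the cover, and count isomorphism classes via the action of $\Aut_0 E$ on $\langle\mc L\rangle$.

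The one place where the paper's execution differs from yours is your Step~2. You assert directly that descent identifies $J_S(b)\cong\PP(\mc O_E\oplus\mc L^{\,b})$, tracking the torsion bundle through the quotient. The paper does \emph{not} prove this pointwise identification. Instead it (a) builds an explicit $G$-invariant sheaf $\mc U$ on $(\tilde E\times\PP^1)^2$ out of the normalized Poincar\'e bundle, descends it, and uses the universal property of the moduli space plus relative minimality to obtain $J_{S_1}(i)\cong S_i$, where $S_i$ is the quotient by the action twisted by $i$; and then (b) separately shows the two sets $\{S_i\}$ and $\{\PP(\mc O_E\oplus\mc L^i)\}$ coincide, not element by element but by a cardinality match, using the technical lemmas $H_F^a=H_{\hat E}^{\mc L}$ in \S\ref{subsec:technical}. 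Your direct route, if it can be made precise, would be cleaner and would make those lemmas unnecessary; but ``reading off'' which power of $\mc L$ arises from the descent is exactly the delicate point, and the paper's indirect argument avoids having to pin it down. Your honest flag that Step~2 is the technical heart is well placed.
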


As an application, in \S \ref{sec:autoeq}, we describe the autoequivalence group 
of the derived categories of certain elliptic ruled surfaces by using the result in \cite{Ue15}.

\subsection{Notation and conventions}\label{subsec:notation_convention}
All varieties will be defined over $\C$, unless stated otherwise.
A \emph{point} on a variety will always mean a closed point.
By an \emph{elliptic surface}, we will
always mean a smooth projective surface $S$ together with a smooth projective
curve $C$ and a relatively minimal morphism $\pi\colon S\to C$ whose general fiber is an elliptic curve. Here  a \emph{relatively
minimal morphism} means a morphism whose fibers contains no $(-1)$-curves. Such a morphism $\pi$ is 
called an \emph{elliptic fibration}.

For an elliptic curve $E$ and some positive integer $m$, we denote  the set of points of order $m$ by ${}_mE$.  
Furthermore, we denote the dual elliptic curve, namely the group scheme $\Pic ^0 E$ of line bundles on $E$ of degree $0$,
by $\hat{E}$, and the group of automorphisms of $E$ fixing the origin by $\Aut_0 E$.
 
$D(X)$ denotes the bounded derived category of coherent sheaves on an algebraic variety $X$, and 
$\Auteq D(X)$ denotes the group of isomorphism classes of $\C$-linear exact autoequivalences of a $\C$-linear triangulated category $D(X)$.

Let $X$ and $Y$ be smooth projective varieties.
For an
object $\mathcal{P}\in D(X\times Y)$, we define an exact functor $\Phi^{\mathcal{P}}$, 
called an \emph{integral functor}, to be
$$
\Phi^{\mathcal{P}}:= 
\mathbb{R}p_{Y*}(\mathcal{P}\Lotimes p^{*}_X(-))\colon D(X)\to D(Y),
$$
where we denote the projections by $p_X\colon X\times Y\to X$ and $p_Y\colon X\times Y\to Y$.
By the result of Orlov (\cite{Or97}),
for a fully faithful functor $\Phi\colon D(X)\to D(Y)$,
there is an object $\mathcal{P}\in D(X\times Y)$, unique up to isomorphism, such that 
$
\Phi\cong \Phi^{\mathcal{P}}.
$
If an integral functor $\Phi^{\mc{P}}$ is an equivalence,
it is called a \emph{Fourier--Mukai transform}.

\subsection{Acknowledgments} 
The author is supported by the Grants-in-Aid 
for Scientific Research (No.23340011). 
Main part of this paper was written during his staying 
at the  Max-Planck Institute for Mathematics, 
in the period from April to September 2014.
He appreciates the hospitality. 


\section{Preliminaries}\label{sec:preliminaries}

\subsection{Fourier--Mukai transforms on elliptic surfaces}\label{subsec:bridgeland}

 Bridgeland, Maciocia and Kawamata show in \cite{BM01,Ka02}
that if a smooth projective surface $S$ has a non-trivial Fourier--Mukai partner $T$, that is $|\FM (S)|\ne 1$,
then both of $S$ and $T$ are abelian varieties, K3 surfaces or elliptic surfaces with 
non-zero Kodaira dimension. 
We consider the last case in more detail. Many results in this subsection are shown in \cite{Br98}.
Readers are recommended to refer to the original paper \cite{Br98}.

Let $\pi:S\to C$ be an elliptic surface. 
For an object $E$ of $D(S)$, we define the fiber degree of $E$ as
\[d(E)=c_1(E)\cdot F, \]
where $F$ is a general fiber of $\pi$. 
Let us denote by $r(E)$ the rank of $E$ and by $\lambda_{S}$  
the highest common factor of the fiber degrees of objects of $D(S)$. 
Equivalently,
$\lambda_{S}$ is the smallest number $d$ such that there is a  $d$-section of $\pi$. 
Consider integers $a$ and $b$ with $a>0$ and $b$ coprime to $a\lambda_{S}$. 
Then, there exists a smooth,
$2$-dimensional component $J_S (a,b)$ of the moduli space of pure dimension 
one stable sheaves on $S$,
the general point of which represents a rank $a$, degree $b$ stable 
vector bundle supported on a smooth fiber of $\pi$. 
There is a natural morphism $J_S (a,b)\to C$, taking a point representing
 a sheaf supported on the
fiber $\pi ^{-1}(x)$ of $S$ to the point $x$. This morphism is a relatively minimal 
elliptic fibration. Furthermore, there is a universal sheaf on $\mc{U}$ on 
$J_S(a,b)\times S$ such that the integral functor $\Phi^\mc{U}$ is a Fourier--Mukai transform.

Put $J_S(b):=J_S(1,b)$. 
Obviously, 
we have $J_S(1)\cong S$. 
As is shown in \cite[Lemma 4.2]{BM01}, there is also an isomorphism
\begin{equation*}\label{eqn:J(a,b)=J(b)}
J_S(a,b)\cong J_S(b).
\end{equation*} 


\begin{thm}[Proposition 4.4 in \cite{BM01}]\label{BMelliptic}
Let $\pi :S\to C$ be an elliptic surface 
and $T$ a smooth projective variety.
Assume that the Kodaira dimension $\kappa (S)$ is non-zero.
Then the following are equivalent.
\renewcommand{\labelenumi}{(\roman{enumi})}
\begin{enumerate}
\item 
$T$ is a Fourier--Mukai partner of $S$. 
\item 
$T$ is isomorphic to $J_S(b)$ for some integer $b$ with $(b,\lambda _{S})=1$. 
\end{enumerate}
\end{thm}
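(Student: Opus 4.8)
The implication (ii) $\Rightarrow$ (i) is immediate from the discussion in \S \ref{subsec:bridgeland}. Given an integer $b$ with $(b,\lambda_S)=1$, take $a=1$; then $b$ is coprime to $a\lambda_S=\lambda_S$, so the smooth $2$-dimensional component $J_S(1,b)=J_S(b)$ is defined and carries a universal sheaf $\mc U$ on $J_S(b)\times S$ for which $\Phi^{\mc U}\colon D(J_S(b))\to D(S)$ is a Fourier--Mukai transform. Hence $T=J_S(b)$ is a Fourier--Mukai partner of $S$.

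For the converse (i) $\Rightarrow$ (ii), fix an equivalence $\Phi=\Phi^{\mc P}\colon D(T)\xrightarrow{\ \sim\ }D(S)$ with $\mc P\in D(T\times S)$. By the results recalled at the start of \S \ref{subsec:bridgeland} (\cite{BM01,Ka02}), $T$ is again a smooth projective surface carrying a relatively minimal elliptic fibration, with $\kappa(T)=\kappa(S)\ne 0$; one further identifies its base curve with $C$, so we have $\pi_T\colon T\to C$. Note that this includes the case $\kappa=-\infty$ of ruled surfaces, where the reconstruction theorem of \cite{BO95} does not apply. The plan is to recognize $T$ as a relative moduli space of stable sheaves on the fibres of $\pi\colon S\to C$. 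To this end, consider the skyscraper sheaves $\mc O_t$, $t\in T$: each is a \emph{point object} of $D(T)$ in the sense of \cite{Br98}, i.e.\ a simple object $P$ with $P\otimes\omega_T\cong P$ and $\Hom_{D(T)}(P,P[i])=0$ for $i<0$. These conditions are categorical (the twist by $\omega_T$ is governed by the Serre functor, and $\dim T=\dim S$), so every $Q_t:=\Phi(\mc O_t)\in D(S)$ is a point object of $D(S)$.

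The crux is the classification of point objects on an elliptic surface $S$ with $\kappa(S)\ne 0$ (\cite{Br98,BM01}): every such object is, up to a global shift, either a skyscraper $\mc O_s$ with $s\in S$, or a pure one-dimensional sheaf supported set-theoretically on a single fibre of $\pi$ and stable for a suitable polarization, of some rank $a\ge 1$ and fibre-degree $b$. Here the hypothesis $\kappa(S)\ne 0$ is indispensable: it forces $\omega_S$ to be non-torsion, which confines the support of a point object to fibres of $\pi$ and rules out the genuine non-sheaf complexes that occur as point objects on K3 and abelian surfaces. Since $\{Q_t\}_{t\in T}$ is a family over the connected variety $T$, all the $Q_t$ lie in a single one of these classes, with a common shift and, in the second case, a common pair $(a,b)$; replacing $\Phi$ by its composite with the common shift, we may assume each $Q_t$ is an honest sheaf, and a standard base-change argument then shows that $\mc P$ is itself a sheaf on $T\times S$, flat over $T$, with $\mc P|_{\{t\}\times S}\cong Q_t$. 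If the $Q_t$ are skyscrapers, then $\Phi$ carries skyscrapers to skyscrapers and $T\cong S=J_S(1)$. Otherwise, fibrewise stability of a rank $a$, degree $b$ sheaf on an elliptic curve gives $(a,b)=1$, and one checks, using that $\Phi^{\mc P}$ is an equivalence, that $(b,a\lambda_S)=1$, so $J_S(a,b)$ is defined as in \S \ref{subsec:bridgeland}; the flat family $\mc P$ induces a classifying morphism $g\colon T\to J_S(a,b)$, and by the universal property of this fine moduli space one has $\mc P\cong(g\times\id_S)^*\mc U'\otimes p_T^*N$ for the universal sheaf $\mc U'$ on $J_S(a,b)\times S$ and some line bundle $N$ on $T$; hence $\Phi^{\mc P}\cong\Phi^{\mc U'}\circ\mb R g_*\circ(-\otimes N)$ by flat base change. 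Since $\Phi^{\mc P}$ and $\Phi^{\mc U'}$ are equivalences, $\mb R g_*\colon D(T)\to D(J_S(a,b))$ is an equivalence, which forces $g$ to be an isomorphism; combining this with the isomorphism $J_S(a,b)\cong J_S(b)$ from \S \ref{subsec:bridgeland} and with $(b,\lambda_S)=1$ (a consequence of $(b,a\lambda_S)=1$), we obtain $T\cong J_S(b)$, as required. The main obstacle is the point-object classification just invoked --- the statement that a point object of $D(S)$ is the shift of a stable sheaf supported on a fibre (or of a skyscraper), which is exactly where $\kappa(S)\ne 0$ is used --- together with the verification that $(b,a\lambda_S)=1$ follows from $\Phi^{\mc P}$ being an equivalence.
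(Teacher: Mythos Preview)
The paper does not supply its own proof of this statement: Theorem~\ref{BMelliptic} is stated with attribution \emph{[Proposition~4.4 in \cite{BM01}]} and used as a black box. So there is nothing in the present paper to compare your argument against; you have in effect reconstructed the outline of Bridgeland--Maciocia's original proof.

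That said, your sketch does track the strategy of \cite{BM01}: the direction (ii)$\Rightarrow$(i) is the existence of a universal sheaf on $J_S(b)\times S$, and for (i)$\Rightarrow$(ii) you push the skyscrapers $\mc O_t$ through $\Phi$, classify the resulting point objects of $D(S)$, and recognise the family $\{\Phi(\mc O_t)\}_{t\in T}$ as a classifying map to a fine moduli space $J_S(a,b)\cong J_S(b)$. You are right that the hypothesis $\kappa(S)\ne 0$ enters exactly at the point-object classification: non-torsion $\omega_S$ forces any object $P$ with $P\otimes\omega_S\cong P$ to be supported on fibres of $\pi$, and this is what rules out the spherical-type complexes that appear when $\kappa=0$.

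A couple of points you have glossed over, which in \cite{BM01} take genuine work: (a) the assertion that the base curve of $T$ can be identified with $C$ is not automatic---in \cite{BM01} this comes from analysing how $\Phi$ relates the Iitaka/elliptic fibrations via the canonical rings, and the compatibility $\pi_T = \pi\circ g$ (up to $\Aut C$) has to be established; (b) the coprimality $(b,a\lambda_S)=1$ is deduced in \cite{Br98,BM01} from the effect of $\Phi$ on numerical invariants (rank and fibre degree transform by an $\SL(2,\Z)$ matrix, and the existence of a $\lambda_S$-section on $T$ then forces the gcd condition), not from an abstract appeal to ``$\Phi$ is an equivalence''. You have flagged both as obstacles, which is honest, but a reader could not fill them in from your sketch alone. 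If you want a self-contained proof rather than a pointer to \cite{BM01}, those two steps need to be written out.
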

There are natural isomorphisms
\begin{equation}\label{eqn:JSB}
J_S(b)\cong J_S(b+\lambda_{S})\cong J_S(-b)
\end{equation}
(see \cite[Remark 4.5]{BM01}).
Therefore, we can define the subset 
\begin{equation*}
H_S:=\{b\in (\Z/\lambda_{S}\Z)^* \mid J_S(b)\cong S \}
\end{equation*}
of the multiplicative group $(\Z/\lambda_{S}\Z)^*$.
We can  see that $H_S$ is a subgroup of $(\Z/\lambda_{S}\Z)^*$,
and there is a natural 
one-to-one correspondence between 
the set $\FM (S)$ and the quotient group 
$(\Z/\lambda_{S}\Z)^* /H_S$ (see  \cite[\S 2.6]{Ue15}). 

\begin{cla}\label{cla:lambda}
When $\lambda_S\le 4$, we have $|\FM (S)|=1$.
\end{cla}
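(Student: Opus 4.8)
The plan is to exploit the bijection, recalled just above the claim, between $\FM(S)$ and the quotient group $(\Z/\lambda_S\Z)^*/H_S$. This immediately reduces the statement to proving that $H_S$ is the whole group $(\Z/\lambda_S\Z)^*$ whenever $\lambda_S\le 4$, i.e. that the quotient is trivial.

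The key observation I would make is that the class of $-1$ always lies in $H_S$, independently of $S$: the isomorphisms in \eqref{eqn:JSB} give $J_S(-b)\cong J_S(b)$ for every $b$, so taking $b=1$ together with $J_S(1)\cong S$ yields $J_S(-1)\cong S$, that is $-1\in H_S$. Granted this, I would simply run through the four admissible values of $\lambda_S$. For $\lambda_S=1$ and $\lambda_S=2$ the unit group $(\Z/\lambda_S\Z)^*$ is already trivial, so there is nothing to prove. For $\lambda_S=3$ and $\lambda_S=4$ one has $(\Z/\lambda_S\Z)^*=\{\pm 1\}$, a group generated by $-1$; since $-1\in H_S$, this forces $H_S=(\Z/\lambda_S\Z)^*$. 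In all four cases $(\Z/\lambda_S\Z)^*/H_S$ is therefore trivial, and hence $|\FM(S)|=1$.

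There is no substantial obstacle here: the whole argument is a short group-theoretic computation once the correspondence $\FM(S)\leftrightarrow(\Z/\lambda_S\Z)^*/H_S$ and the symmetry $J_S(-b)\cong J_S(b)$ are in hand. The only point one should not overlook is that $(\Z/n\Z)^*$ happens to be generated by $-1$ exactly for small $n$ — in particular for all $n\le 4$ — which is precisely what makes the bound $\lambda_S\le 4$ the natural cutoff in the claim.
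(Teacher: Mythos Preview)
Your argument is correct and is essentially the paper's own: both reduce to the observation that $J_S(-b)\cong J_S(b)$ forces $-1\in H_S$, whence the quotient $(\Z/\lambda_S\Z)^*/H_S$ is trivial for $\lambda_S\le 4$. The paper phrases this as the counting bound $|\FM(S)|\le \varphi(\lambda_S)/2$ for $\lambda_S>2$, but that is the same content as your statement that $(\Z/\lambda_S\Z)^*=\{\pm 1\}$ for $\lambda_S=3,4$.
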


\begin{proof}
When $\lambda_S\le 2$, $(\Z/\lambda_{S}\Z)^*$ is 
trivial and hence, $\FM (S)=\{S\}$. 
 For $\lambda_S>2$ and $b\in (\Z/\lambda_{S}\Z)^*$, we have $b\ne \lambda_S-b$ 
in $(\Z/\lambda_{S}\Z)^*$, 
and hence, the isomorphisms \eqref{eqn:JSB} yield
\begin{equation*}
|\FM (S)|\le \varphi (\lambda_{S})/2,
\end{equation*}
where $\varphi$ is the Euler function. This inequality implies
$|\FM (S)|=1$ for $\lambda_S\le 4$.
\end{proof}

In general, it is not easy  to describe the group $H_S$,
equivalently to describe the set $\FM(S)$, concretely. 
However, even if $\lambda_S\ge 5$, there are several examples in which 
we can compute the cardinality of  the set $\FM (S)$  (see \cite[Example 2.6]{Ue11}).

\if0
\subsection{Fourier--Mukai transforms of quotient varieties}\label{subsec:quotient}
We review a method to construct Fourier--Mukai transforms between quotient varieties, due to \cite{BM98}.
Let $X$ and $Y$ be a smooth projective varieties on which a finite group $G$ acts. Suppose that 
$X/G$ and $Y/G$ are smooth, and let 
$$
p_X \colon X\to X/G, \quad p_Y\colon Y\to Y/G
$$
be the quotient morphisms.  

\begin{defn}
Let $\Phi=\Phi^{\mathcal{U}}\colon D(X)\to D(Y)$ be a Fourier--Mukai transform.
$\Phi$ is called $G$-\emph{equivariant} if 
there is an automorphism $\sigma \in\Aut G$, and an isomorphism of functors 
$$
 g^*\circ \Phi \cong \Phi\circ \sigma(g)^*.
$$
for each $g\in G$. 
\end{defn}

The $G$-equivariantness of $\Phi=\Phi^{\mathcal{U}}$ is equivalent 
to the existence of $\sigma$ satisfying
$$
({\id}_Y\times g)^*\mathcal{U}\cong
(\sigma(g)\times{\id}_X)^*\mathcal{U}
$$ 
for any $g\in G$. It is also equivalent to 
the existence of $\sigma$ satisfying
$$
(\sigma (g )\times g)^*\mc {U}\cong \mc{U}
$$ 
for any $g$.

When $\sigma=\id_G$, a similar statement to Proposition \ref{prop:quotient_equivalence} 
is shown in \cite[Lemma 5 (5)]{Pl07} under some assumptions. 


\begin{prop}[cf.~\cite{BM98}]\label{prop:quotient_equivalence}
Suppose that a given Fourier--Mukai transform
$\Phi=\Phi^{\mathcal{U}}\colon D(X)\to D(Y)$
is $G$-equivariant. Then, there is a Fourier--Mukai transform 
$\Phi^{\mathcal{U'}}\colon D(X/G)\to D(Y/G)$
for some $\mathcal{U'}\in D((X/G)\times (Y/G))$.
\end{prop} 

\begin{proof}
The proof is scattered in \cite{BM98}, so we give only 
a sketch of it. 
The $G$-equivariantness implies that 
$({\id}_X\times p_Y )_*\mathcal{U}$
is $G$-invariant, and hence
\begin{equation}\label{eqn:P_and_Q}
(p_X \times {\id}_{Y/G})^*\mathcal{U'}
\cong ( {\id}_X\times p_Y )_*\mathcal{U}
\end{equation}
holds for some $\mathcal{U'}$ of $D((X/G)\times (Y/G))$.
We define an integral functor
$$
\Phi':=\Phi^{\mathcal{U'}}\colon D(X/G)\to D(Y/G).
$$
Then by \cite[Lemma 4.4]{BM98}, we conclude that 
there are isomorphisms
$$
p_{Y*}\circ \Phi \cong \Phi'\circ p_{X*},
\quad
p_Y^*\circ \Phi' \cong \Phi\circ p_{X}^*.
$$
(If these isomorphisms exist, $\Phi$ is called a \emph{lift} of $\Phi'$, and $\Phi'$  is called a \emph{descent} of $\Phi$.) 
Recall that the quasi-inverse $\Psi$ of $\Phi$ is also $G$-equivariant, hence it is the lift of some integral functor 
$\Psi'$. Then $\Psi\circ \Phi \cong {\id}_{D(X)}$ is the
 lift of $\Psi'\circ\Phi'$.
Now \cite[Lemma 4.3 (b)]{BM98} implies the result.
\end{proof}

\begin{rem}\label{rem:G-equivalent}
Take points $x\in X$. 
Then for the object $\mc{U'}\in D((X/G)\times (Y/G))$ in the proof of Proposition \ref{prop:quotient_equivalence},
the isomorphism \eqref{eqn:P_and_Q} yields the isomorphism
$$
\mc{U'}|_{p_X(x)\times (Y/G)}\cong ( ({\id}_X\times p_{Y})_*\mc{U})|_{x\times (Y/G)}.
$$
Assume furthermore that $p_Y$ is smooth. Then by using \cite[Lemma 1.3]{BO95}, we can see that the right-hand side in the above 
isomorphism is isomorphic to 
$p_{Y*}(\mc{U}|_{x\times Y})$. 
\end{rem}
\fi


\subsection{Some technical lemmas on elliptic curves}\label{subsec:technical}

Let $F$ be an elliptic curve. 
For points $x_1,x_2\in F$, to distinguish the summations as divisors 
and as elements in the group scheme $F$,
we denote by $x_1\oplus x_2$ 
the sum 
of them by the operation of $F$,
and 
$$
i\cdot x_1:=x_1 \oplus \cdots \oplus x_1 \quad \mbox{($i$ times)}.
$$
We also denote by 
$$
i x_1:=x_1 + \cdots + x_1 \quad \mbox{($i$ times)}
$$ the divisor on $F$ of degree $i$.
As is well-known, there is a group scheme isomorphism
\begin{equation}\label{eqn:dual}
F\to \hat{F} \qquad x\mapsto \mc O_F(x-O),
\end{equation}
where $O$ is the origin of $F$. If we identify $\hat{F}$ and $F$ by \eqref{eqn:dual},                                                                               
so called \emph{the normalized Poincare bundle}  $\mc P_0$ on $F\times F$ is defined by
$$
\mc{P}_0:=\mc{O}_{F\times F}(\Delta_F-F\times O-O\times F),
$$
where $\Delta_F$ is the diagonal of $F$ in $F\times F$.
It satisfies that 
$$
\mc{P}_0|_{F\times x}\cong \mc{P}_0|_{x\times F}\cong\mc O_F(x-O)
$$
for a point $x\in F$.

Let us fix  an element $a\in{}_mF$ with a positive integer $m$. 
Let us denote by $E$ the quotient variety $F/\Span{a}$, by 
$$q\colon F\to E$$ 
the quotient morphism, and by 
$$
\hat{q}\colon \hat{E}\to\hat{F}   
$$
the dual isogeny of $q$.
Define a subgroup of $(\Z/m\Z)^*$ as
$$
H_F^a:=\{ k\in(\Z/m\Z)^*\mid \text{ $\exists\phi\in \Aut_0 F$ such that $\phi(a)=k\cdot a$} \}.
$$ 
 
Recall that 
%
\begin{itemize}
\item
$F\cong\C/(\Z+\sqrt{-1}\Z)$, $\Aut_0 F=\{ \pm 1,  \pm\sqrt{-1}\}$  when $j(F)=1728$,
\item
$F\cong\C/(\Z+\omega\Z)$, $\Aut_0 F=\{\pm 1,\pm\omega,\pm\omega^2 \}$ when  $j(F)=0,$ and 
\item
$\Aut_0 F=\{\pm 1 \}$ when $j(F)\ne 0,1728$. 
\end{itemize}
%
Here $j(F)$ is the $j$-invariant of $F$, and we put $\omega=\frac{-1+\sqrt{-3}}{2}$.
We use the following technical lemmas in the proof of Theorem \ref{thm:FMpartner}.


\begin{lem}\label{lem:j-invariant}
Suppose that $m>3$. Then
exactly one of the following three cases for $F$ and $a\in{}_mF$ occurs.
\begin{enumerate}
\item
The equality $H_F^a=\{\pm 1\}$ holds.
\item 
We have $j(F)=1728$, and there is an integer $n$ such that $m$ divides $n^2+1$. (Note that this condition implies that $\pm n\in (\Z/m\Z)^*$.) Moreover,
the point $a\in F$ is an element in the subgroup 
$$
\Span{\frac{n}{m}+\frac{1}{m}\sqrt{-1}}
$$ 
of $F\cong \C/(\Z+\sqrt{-1}\Z)$, and the equality $H_F^a=\{\pm 1,\pm n \}$ holds.
\item 
We have $j(F)=0$, and there is an integer $n$ such that $m$ divides $n^2+n+1$. (Note that this condition implies that $\pm n\in (\Z/m\Z)^*$.) Moreover, 
the point $a\in F$ is an element in the subgroup 
$$
\Span{\frac{n+1}{m}+\frac{1}{m}\omega}
$$ 
of $F\cong \C/(\Z+\omega\Z)$, and the equality
$H_F^a=\{\pm 1,\pm n,\pm n^2 \}$ holds.
\end{enumerate}
\end{lem}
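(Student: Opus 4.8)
The plan is to translate the condition $k \in H_F^a$ into an explicit condition on an automorphism $\phi \in \Aut_0 F$ written as multiplication by a complex number, and then to run through the three possibilities for $\Aut_0 F$ dictated by $j(F)$. First I would fix a uniformization $F \cong \C/\Lambda$ with $\Lambda = \Z + \tau\Z$ chosen appropriately in each case, so that every $\phi \in \Aut_0 F$ is given by $z \mapsto \zeta z$ for a root of unity $\zeta$ with $\zeta \Lambda = \Lambda$. Writing $a = (p + q\tau)/m$ with $\gcd(p,q,m)=1$ (after choosing a representative so that $a$ has exact order $m$), the equation $\phi(a) = k \cdot a$ becomes the congruence $\zeta(p + q\tau) \equiv k(p + q\tau) \pmod{m\Lambda}$. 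The content of the lemma is to determine, for each $\zeta$, when this congruence is solvable in $k$ coprime to $m$, and to read off the resulting set $H_F^a$.

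The generic case $j(F) \ne 0, 1728$, where $\Aut_0 F = \{\pm 1\}$, gives immediately $H_F^a \subseteq \{\pm 1\}$; and $\pm 1 \in H_F^a$ always, so $H_F^a = \{\pm 1\}$, which is case (i). For $j(F) = 1728$ I would take $\tau = \sqrt{-1}$, so $\Aut_0 F = \{\pm 1, \pm\sqrt{-1}\}$; the new information is whether the automorphism $\phi\colon z \mapsto \sqrt{-1}\,z$ satisfies $\phi(a) = k\cdot a$ for some $k$. With $a = (p+q\sqrt{-1})/m$ this reads $\sqrt{-1}(p + q\sqrt{-1}) = -q + p\sqrt{-1} \equiv k(p + q\sqrt{-1}) \pmod{m}$, i.e. $kp \equiv -q$ and $kq \equiv p \pmod m$; eliminating gives $k^2 \equiv -1 \pmod m$, so $m \mid n^2 + 1$ with $n := k$, and conversely given such $n$ one checks $a$ lies in the cyclic subgroup generated by $(n + \sqrt{-1})/m$ and that this $a$ is genuinely fixed up to the scalar $n$. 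Together with $-\phi\colon z \mapsto -\sqrt{-1}z$ contributing $-n$, this yields $H_F^a = \{\pm 1, \pm n\}$, case (ii). The case $j(F) = 0$ is entirely parallel with $\tau = \omega$, $\zeta = -\omega$ (or $\zeta = \omega$, up to adjusting signs) a primitive sixth root of unity: the relation $\omega^2 + \omega + 1 = 0$ forces the congruence $k^2 + k + 1 \equiv 0 \pmod m$, hence $m \mid n^2 + n + 1$, the point $a$ lies in $\langle (n+1)/m + \omega/m \rangle$, and since $\langle \zeta \rangle = \{\pm 1, \pm n, \pm n^2\}$ as a subgroup of $(\Z/m\Z)^*$ one gets $H_F^a = \{\pm 1, \pm n, \pm n^2\}$, case (iii). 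Mutual exclusivity follows because the three cases are distinguished by $j(F)$ (and $j(F)=0,1728$ are themselves distinct), while the hypothesis $m > 3$ guarantees $n \not\equiv \pm 1$, so case (i) really is disjoint from (ii) and (iii): if $n \equiv \pm 1 \pmod m$ then from $m\mid n^2+1$ we'd get $m \mid 2$, and from $m \mid n^2+n+1$ we'd get $m \mid 3$ or $m \mid 1$.

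The main obstacle is the bookkeeping in showing that the necessary condition is also \emph{sufficient} and \emph{sharp}: that is, given an integer $n$ with $m \mid n^2+1$ (resp. $m \mid n^2+n+1$), one must exhibit the point $a$ in the stated cyclic subgroup and verify that its stabilizer-type set $H_F^a$ is \emph{exactly} $\{\pm 1, \pm n\}$ (resp. $\{\pm 1, \pm n, \pm n^2\}$) and no larger — in particular that no further coincidences force extra elements of $(\Z/m\Z)^*$ into $H_F^a$. This amounts to checking that the only $k$ for which $\zeta(p+q\tau) \equiv k(p+q\tau)$ is solvable are the ones coming from the finitely many $\zeta \in \Aut_0 F$, which is a direct computation using that $\gcd(p,q,m)=1$ and that $1, \tau$ are a basis. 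The remaining verifications — that $\pm 1 \in H_F^a$ always, that $H_F^a$ is closed under multiplication (already noted in the text that it is a subgroup), and the numerical claim $|H_F^a| \in \{2,4,6\}$ — are then immediate.
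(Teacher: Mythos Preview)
Your proposal is correct and follows essentially the same route as the paper: uniformize $F$, write $a=(p+q\tau)/m$, translate $\phi(a)=k\cdot a$ into a pair of congruences modulo $m$, and run through the finitely many $\zeta\in\Aut_0 F$ according to $j(F)$. The only cosmetic difference is in the disjointness argument---the paper simply notes that $|H_F^a|=2,4,6$ in the three cases, whereas you separate by $j$-invariant and use $m>3$ to rule out $n\equiv\pm1$---but both are immediate.
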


\begin{proof}
When $j(F)\ne 0,1728$, obviously the case (i) occurs.

Next, let us consider the case  $j(F)=1728$. 
Put $a=\frac{x}{m}+\frac{y}{m}\sqrt{-1}$  for some $x,y\in \Z$,
 and suppose first that an equality
\begin{equation}\label{eqn:aka}
\sqrt{-1}a=n\cdot a
\end{equation}
holds for some $n\in \Z$. Then we have
\begin{equation}\label{eq:kxy}
nx\equiv -y,\quad ny\equiv x \quad(\module m).
\end{equation}
Hence, we know that $a=\frac{ny}{m}+\frac{y}{m}\sqrt{-1}$, and since the order of $a$ in $F$ is $m$,
$m$ and $y$ are coprime. The coprimality and the equations \eqref{eq:kxy}
yield that $m$ divides $n^2+1$.
The coprimality also implies that 
the subgroups $\Span{a}$ and $\Span{\frac{n}{m}+\frac{1}{m}\sqrt{-1}}$ coincide. 
We know  from  $\Aut_0 F=\{ \pm 1,  \pm\sqrt{-1}\}$ that 
$H_F^a=\{\pm 1,\pm n \}$ holds.


In the case (iii), the proof is similar.

It follows from the conditions on $m$ and $n$  that $|H_F^a|=2,4$ and $6$ in the case (i), (ii) and (iii) respectively, hence
the two cases do not occur at the same time. 
\end{proof}

Recall that 
\begin{align*}
H_{\hat{E}}^\mc{L}:=&\{ k\in(\Z/m\Z)^*\mid \text{ $\exists\hat{\psi}\in \Aut_0 \hat{E}$ such that $\hat{\psi}(\mc{L})=\mc{L}^k$} \}\\
=&\{ k\in(\Z/m\Z)^*\mid \text{ $\exists\psi\in \Aut_0 E$ such that $\psi^*\mc{L}=\mc{L}^k$} \}
\end{align*}
for a line bundle $\mc{L}\in{}_m\hat{E}$. 


\begin{lem}\label{lem:F=E}
In each case of Lemma \ref{lem:j-invariant}, the equality
$
H^a_F=H^{\mc{L}}_{\hat{E}}$ 
holds for any
$\mc{L}\in{}_m\hat{E}$ with $\ker \hat{q}=\Span{\mc{L}}$.
(In particular, there is an isomorphism $F\cong E$  in the cases (ii) and (iii), since their $j$-invariants coincide.) 
\end{lem}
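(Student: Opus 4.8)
The plan is to directly relate automorphisms of $F$ fixing the point $a$ (up to scaling) with automorphisms of $E$ fixing $\mc L$ (up to the $k$-th power), using the fact that $q\colon F\to E$ is an isogeny with kernel $\Span a$ and that $\hat q\colon \hat E\to\hat F$ is its dual isogeny with kernel $\Span{\mc L}$. The key observation is that an automorphism $\phi\in\Aut_0 F$ descends to an automorphism of $E=F/\Span a$ precisely when $\phi(\Span a)=\Span a$, i.e.\ when $\phi(a)=k\cdot a$ for some $k\in(\Z/m\Z)^*$; in that case the induced automorphism $\bar\phi\in\Aut_0 E$ has dual $\widehat{\bar\phi}\in\Aut_0\hat E$ satisfying $\widehat{\bar\phi}(\mc L)=\mc L^{?}$, and I must track how the exponent $k$ transforms under dualization. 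Since dualization is a contravariant group-scheme functor sending $\cdot k$ on $F$ to $\cdot k$ on $\hat F$ (multiplication by an integer is self-dual), and since $\hat q$ identifies $\ker\hat q=\Span{\mc L}$ with the annihilator of $\Span a$ under the Weil pairing, the exponent is preserved up to sign; the ambiguity in sign is harmless because both $H_F^a$ and $H_{\hat E}^{\mc L}$ are closed under $k\mapsto -k$ (as $-1\in\Aut_0 F$ always). This gives the inclusion $H_F^a\subseteq H_{\hat E}^{\mc L}$ and, running the same argument with $\hat q$ and the double-duality $\hat{\hat E}\cong E$, the reverse inclusion.

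Concretely I would proceed as follows. First, record the standard fact that for an isogeny $q\colon F\to E$ with $\ker q=\Span a$, an element $\phi\in\Aut_0 F$ induces $\bar\phi\in\Aut_0 E$ iff $\phi$ preserves $\ker q$, and then $\widehat{\bar\phi}$ is the automorphism of $\hat E$ making the square with $\hat q$ commute. Second, observe that $\Span{\mc L}=\ker\hat q$ is canonically the Cartier dual of $\Span a=\ker q$, so an automorphism of $\hat E$ preserves $\Span{\mc L}$ iff the dual automorphism of $E$ preserves $\Span a$; hence $\psi\in\Aut_0 E$ with $\psi^*\mc L=\mc L^k$ correspond bijectively (via $q$, $\hat q$, and duality) to $\phi\in\Aut_0 F$ with $\phi(a)=k'\cdot a$, where $k'=\pm k$. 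Third, in cases (ii) and (iii) of Lemma \ref{lem:j-invariant} we additionally have $j(F)=j(E)$ (both equal $1728$, resp.\ $0$), since $E$ is isogenous to $F$ and the only isogeny classes with extra automorphisms over $\C$ are the CM classes with $j=0$ or $1728$, so $F\cong E$ and the parenthetical assertion follows; in case (i) there is nothing to prove beyond $H_F^a=\{\pm1\}=H_{\hat E}^{\mc L}$, which holds because $j(\hat E)=j(E)\ne 0,1728$ forces $\Aut_0\hat E=\{\pm1\}$.

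The main obstacle I anticipate is the bookkeeping of the sign in the exponent $k$ under the chain $F\xrightarrow{q}E$, dualize to $\hat E\xrightarrow{\hat q}\hat F$, and the identification \eqref{eqn:dual} of $F$ with $\hat F$: one must check that the composite identification $F\cong\hat F$ is compatible with (or differs by a sign from) the one used to define $\mc L$ from $a$, and that multiplication-by-$k$ really dualizes to multiplication-by-$k$ rather than by $k^{-1}$ or $-k$. All of these potential discrepancies are absorbed by the fact that $-1$ lies in every $\Aut_0$, so $H_F^a$ and $H_{\hat E}^{\mc L}$ are both stable under negation and under inversion (they are subgroups of $(\Z/m\Z)^*$), which makes the final equality insensitive to the precise normalization. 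Once the sign issue is dispatched, the equality $H_F^a=H_{\hat E}^{\mc L}$ is immediate from the bijection above, and the isomorphism $F\cong E$ in cases (ii), (iii) follows from equality of $j$-invariants as noted.
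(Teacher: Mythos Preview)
Your duality/descent approach is genuinely different from the paper's. The paper argues case by case with explicit lattices: in case~(ii) it checks directly that the lattice $L+\Span a$ is preserved by multiplication by $\sqrt{-1}$ (whence $j(E)=1728$), then exhibits a specific generator $\mc L'\in\ker\hat q$ and computes $\sqrt{-1}\cdot q(1/m)=-n\cdot q(1/m)$ to obtain $H^a_F=\{\pm1,\pm n\}=H^{\mc L'}_{\hat E}$; case~(iii) is analogous, and case~(i) is then dispatched by contraposition, swapping the roles of $(F,a)$ and $(\hat E,\mc L)$. Your bijection via descent through $q$ and $\hat q$ is more conceptual and, once the exponent bookkeeping is done, handles all three cases uniformly; the sign/inversion ambiguity is indeed harmless since both $H^a_F$ and $H^{\mc L}_{\hat E}$ are subgroups of $(\Z/m\Z)^*$ containing $-1$.

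There is, however, a genuine gap in your justification of the parenthetical claim $F\cong E$. You assert that ``$E$ is isogenous to $F$ and the only isogeny classes with extra automorphisms over $\C$ are the CM classes with $j=0$ or $1728$,'' but this is false: the isogeny class of the curve with $j=1728$ contains curves with infinitely many distinct $j$-invariants (those with CM by orders in $\Q(\sqrt{-1})$), and only the one with $j=1728$ has $|\Aut_0|>2$. Being isogenous to a curve with extra automorphisms does not imply having extra automorphisms. The same error appears in your case~(i) remark that $j(E)\ne 0,1728$; this is neither justified nor needed.

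The fix comes from your own bijection. Once $H^a_F=H^{\mc L}_{\hat E}$ is established, in case~(ii) we get $|H^{\mc L}_{\hat E}|=4$; since for $m>3$ the restriction map $\{\hat\psi\in\Aut_0\hat E:\hat\psi(\Span{\mc L})=\Span{\mc L}\}\to(\Z/m\Z)^*$ is injective (a nontrivial $\hat\psi$ fixes only points of order $\le 3$), this forces $|\Aut_0\hat E|\ge 4$, hence $j(E)\in\{0,1728\}$, and $j(E)=0$ is excluded because $\Z/6\Z$ has no subgroup of order $4$. Thus $j(E)=1728=j(F)$ and $F\cong E$. Case~(iii) is parallel.
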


\begin{proof}
Let us consider the case (ii) first.
Let $L$ be the lattice generated by $1$ and $\sqrt{-1}$ in $\C$ so that $F$ with $j(F)=1728$ is isomorphic to $\C/L$. 
Moreover, the elliptic curve $E=F/\Span{a}$ is isomorphic to $\C/(L+\Span{a})$. 
We can see that the lattice $L+\Span{a}$ is preserved by the complex multiplication by $\sqrt{-1}$.
(Hence, $j(E)=1728$, which implies $F\cong E$.) 
It turns out that the quotient morphism 
$$
q\colon F\cong\C/L\to E\cong\C/(L+\Span{a})
$$ 
induced by the inclusion $L\hookrightarrow L+\Span{a}$
 is compatible with the complex multiplication by $\sqrt{-1}$.

Take an element $\frac{1}{m}\in \C/L(\cong F)$, and put 
$$
a:=\frac{ny}{m}+\frac{y}{m}\sqrt{-1}
$$
for the integer $n$ in (ii) and some $y\in (\Z/m\Z)^*$. 
We define $\mc{L}'$ to be the element in $\hat{E}$ corresponding to $q\bigl(\frac{1}{m}\bigr)\in E$ via $E\cong \hat{E}$. 
Then we have
$$
\sqrt{-1}q\bigl(\frac{1}{m}\bigr)=q\bigl(\frac{1}{m}\sqrt{-1}\bigr)=q\bigl(y^{-1}a-\frac{n}{m}\bigr)=-nq\bigl(\frac{1}{m}\bigr),
$$
and this implies the equality $H^a_F=\{\pm1,\pm n\}=H^{\mc{L}'}_{\hat{E}}$.
We can also see that 
$$
\Span{a,\frac{1}{m}}=\Span{\frac{\sqrt{-1}}{m},\frac{1}{m}}=\ker [m],
$$
where $[m]$ is the multiplication map by $m$. 
Recall that $[m]=\hat{q}\circ q$ and $\ker q=\Span{a}$. 
Consequently,  we have $\ker \hat{q}=\Span{\mc{L}'}$.
For any $\mc{L}\in{}_m\hat{E}$ with $\ker \hat{q}=\Span{\mc{L}}$, the equality 
$H^{\mc{L}}_{\hat{E}}=H^{\mc{L}'}_{\hat{E}}$ holds, which gives 
 the assertion.

The proof of the case (iii) is similar. 

Next let us take an element $\mc{L}\in \ker\hat{q}$, and suppose that $|H_{\hat{E}}^{\mc{L}}|=4$ or $6$, namely
the case (ii) or (iii) occurs for $\hat{E}$ and $\mc{L}\in{}_m\hat{E}$. Then we have already shown above that 
$H^a_F=H^{\mc{L}}_{\hat{E}}$ (just by replacing the roles of $\hat{E}$ and $F$). Consequently, in the case (i), 
we again obtain the assertion.
\end{proof}

\subsection{Elliptic ruled surfaces over a field of arbitrary characteristic}\label{subsec:positive}
In this subsection, we refer a result which is needed in the proof of Theorem \ref{thm:FMpartner}. 
The results and notation here over a positive characteristic field are not logically needed in this paper, 
but we leave them to explain a background of Problem \ref{prob:positive}. 

Let $k$ be an algebraically closed field of characteristic $p \geq 0$.
%
Suppose that $E$ is an elliptic curve defined over $k$,   $\mc E$ is a normalized,  in the sense of \cite[V.~\S 2]{Ha77}, 
locally free sheaf of rank $2$ on $E$, and 
$f \colon S = \mathbb{P} (\mathcal{E} ) \rightarrow E$ 
is a $\PP ^1$-bundle on $E$. 
Set $e := -\deg \mathcal{E}$. Then 
we can see that $e=0$ or $-1$
if $-K_S$ is nef, and in particular, if $S$ has an elliptic fibration $\pi \colon S\to \PP ^1$.
Furthermore, when the locally free sheaf $\mc E$ is decomposable and $e=0$,
it turns out that  $\mc E=\mc O_E\oplus \mc L$ for some $\mc L\in \hat{E}$.
When $e=-1$, it is indecomposable (see \cite[V.~Theorem 2.12]{Ha77}).  

We use the following result to 
describe the set $\FM (S)$ for elliptic ruled surfaces $S$ in Theorem \ref{thm:FMpartner}.


\begin{thm}[\cite{To11}]\label{thm:TU14}
We use the above notation. 
\begin{enumerate}
\item
For $e = 0$, $S$ has an elliptic fibration in the cases (i-1), (i-2) and (i-5).
Moreover, we have the following:
\begin{center}
\begin{tabular}{|c|c|c|c|}
\hline
       & $\mathcal{E}$ & singular fibers   & $p$ \\ \hline \hline
(i-1)& $\str_E \oplus \str_E$ & no singular  fibers & $p\ge 0$ \\ \hline
(i-2)& $\str_E \oplus \mathcal{L}$,  $\ord \mc L=m>1$ &$2\times {}_m\mathrm{I}_0$   & $p\ge 0$ \\ \hline
(i-3)& $\str_E \oplus \mathcal{L}$,  $\ord \mc L=\infty$ & &$p\ge 0$\\ \hline
(i-4)& indecomposable &  & $p=0$ \\ \hline
(i-5)& indecomposable & ${}_p\mathrm{I}_0$ $($a wild fiber$)$   & $p>0$ \\ \hline
\end{tabular} 
\end{center}
\item
Suppose that $e=-1$ and $p\ne 2$. Then, $S$ has an elliptic fibration with $3$ singular fibers of type ${}_2\mathrm{I}_0$.
\end{enumerate}
\end{thm}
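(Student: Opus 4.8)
The plan is to regard an elliptic fibration $\pi\colon S\to\PP^1$ as a base-point-free pencil on $S$, and to proceed in three stages: first pin down the numerical class of a general fiber, then decide for which $\mc E$ such a pencil actually exists, and finally read off the types of the special fibers. Write $\mathrm{NS}(S)=\Z C_0\oplus\Z F$, where $C_0$ is the normalized section with $C_0^2=-e$ and $F$ is a fiber of $f$, so that $C_0\cdot F=1$, $F^2=0$ and $K_S\equiv-2C_0-eF$. A general fiber $\Phi$ of an elliptic fibration has $\Phi^2=0$, and since it has arithmetic genus $1$, adjunction gives $\Phi\cdot(\Phi+K_S)=0$, hence $\Phi\cdot K_S=0$. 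Solving these two equations in $\mathrm{NS}(S)$ shows that $\Phi\equiv\alpha C_0$ for some $\alpha>0$ when $e=0$, and $\Phi\equiv 2C_0-F$ (a bisection of $f$) when $e=-1$. Thus the existence question becomes: does the relevant complete linear system contain a base-point-free pencil whose general member is an integral curve of arithmetic genus $1$?

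I would reduce every cohomological computation to the base $E$ by pushing forward along $f$: the projection formula and the vanishing of higher direct images give $H^0\bigl(S,\str_S(\alpha C_0)\otimes f^*\mc M\bigr)\cong H^0\bigl(E,\Sym^\alpha\mc E\otimes\mc M\bigr)$, so everything is governed by sections of symmetric powers of $\mc E$. For the decomposable case $\mc E=\str_E\oplus\mc L$ with $\deg\mc L=0$ one has $\Sym^\alpha\mc E=\bigoplus_{i=0}^\alpha\mc L^i$, whence $h^0\bigl(S,\str_S(\alpha C_0)\bigr)=\#\{0\le i\le\alpha:\mc L^i\cong\str_E\}$. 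If $\mc L\cong\str_E$ this equals $\alpha+1$ already for $\alpha=1$, and the pencil $|C_0|$ is the projection $S=E\times\PP^1\to\PP^1$, which is smooth with no singular fibers (case (i-1)). If $\ord\mc L=m>1$, the count first reaches $2$ at $\alpha=m$, giving a pencil $|mC_0|$; to see its geometry transparently I would realize $S$ as the quotient $(F'\times\PP^1)/(\Z/m\Z)$ for the cyclic isogeny $F'\to E$ trivializing $\mc L$, which shows that the general fiber is an integral $m$-fold cover of $E$ (hence elliptic) while the two disjoint sections become multiple fibers $mC_0$, $mC_1$ of multiplicity $m$ with smooth reduction, i.e. two fibers of type ${}_m\mathrm I_0$ (case (i-2)). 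If $\ord\mc L=\infty$, the count is always $1$, so no pencil and hence no elliptic fibration exists (case (i-3)).

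For indecomposable $\mc E$ of degree $0$ (the Atiyah bundle) the section $C_0$ is unique, so again $\Phi\equiv\alpha C_0$. In characteristic $0$, Atiyah's computation that $\Sym^\alpha\mc E$ remains indecomposable with a one-dimensional space of global sections gives $h^0=1$ for every $\alpha$, so no pencil exists and $S$ carries no elliptic fibration (case (i-4)). In characteristic $p>0$ this breaks down exactly at $\alpha=p$, where the additivity of the $p$-th power map produces an extra section of $\Sym^p\mc E$; the resulting pencil $|pC_0|$ yields an elliptic fibration with a single multiple fiber of multiplicity $p$, wild because $p$ equals the characteristic, i.e. a fiber ${}_p\mathrm I_0$ (case (i-5)). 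Finally, for $e=-1$ and $p\ne2$ the fiber class is the bisection $\Phi\equiv2C_0-F$, and I would construct the fibration by realizing $S$ as the quotient $(E\times\PP^1)/E[2]$, where $E[2]\cong(\Z/2\Z)^2$ acts by translation on $E$ and through a Klein four-subgroup of $\Aut\PP^1$ on the second factor. The induced morphism $S\to\PP^1/E[2]\cong\PP^1$ is the desired fibration; its general fiber is $E$, and the three points of the target with nontrivial (order-$2$) stabilizer give exactly three multiple fibers of multiplicity $2$ with smooth reduction, i.e. three fibers of type ${}_2\mathrm I_0$. One then checks that the descended rank-$2$ bundle is indeed the normalized indecomposable bundle with $e=-1$, and that the hypothesis $p\ne2$ is precisely what makes $E[2]\cong(\Z/2\Z)^2$ and the action separable, so that the count of double fibers is three.

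The routine part is the decomposable $e=0$ analysis, which is pure linear algebra of line bundles on $E$. The main obstacle is the positive-characteristic geometry: establishing that $\Sym^p$ of the Atiyah bundle acquires exactly one extra section, and then identifying the associated multiple fiber as a single \emph{wild} ${}_p\mathrm I_0$, requires tracking the Frobenius and inseparability carefully, since the characteristic-$0$ argument gives the opposite conclusion. A secondary technical point is the bookkeeping in the $e=-1$ case: verifying that the $E[2]$-quotient reproduces the normalized degree-$1$ indecomposable bundle, that no further singular fibers occur, and that the Euler-characteristic and canonical-bundle accounting is consistent with three multiplicity-$2$ logarithmic transforms.
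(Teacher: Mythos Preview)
The paper does not prove this theorem; it is quoted from \cite{To11} and used as a black box. There is thus no proof in the paper against which to compare your attempt.

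As an independent argument your strategy is the standard one and handles the decomposable $e=0$ cases (i-1)--(i-3) and the characteristic-zero indecomposable case (i-4) correctly. One slip in the $e=-1$ analysis: the equations $\Phi^2=0$ and $\Phi\cdot K_S=0$ determine $\Phi$ only up to a positive multiple (as you yourself note for $e=0$), and the primitive class $2C_0-F$ is the class of a \emph{half}-fiber, not of the general fiber. Comparing $K_S\equiv-2C_0+F$ with the canonical-bundle formula $K_S\equiv-\tfrac12\Phi$ forces $\Phi\equiv 4C_0-2F$; see Remark~\ref{rem:lambda}, where indeed $\Phi\cdot C_0=2$ rather than $1$. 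Your quotient construction $(E\times\PP^1)/E[2]$ is nonetheless correct and does produce the $e=-1$ surface (the minimal $E[2]$-equivariant map $E\to\PP^1$ has degree $2$, whence $C_0^2=1$), so this error is cosmetic. The substantive gap is case (i-5): saying that ``additivity of the $p$-th power produces an extra section of $\Sym^p\mc E$'' is a heuristic, not an argument, and identifying the resulting fiber as a single \emph{wild} ${}_p\mathrm I_0$ (rather than tame, or several fibers) requires a genuine computation that your sketch does not supply. That computation is precisely the content of the cited thesis.
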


\if0
\begin{thm}[\cite{To11}]\label{thm:TU14}
In the above notation,  
$S$ has an elliptic fibration $\pi$ if and only if either
\begin{enumerate}
\item
$\mathcal{E}\cong \str_E \oplus \mathcal{L}$ for $\mathcal{L}\in{}_m\hat{E}$ ($m\ge 1$), in particular $e=0$, or
\item
$e=-1$ and $\mathcal{E}$ is indecomposable.
\end{enumerate}

Furthermore in the case $e=0$ and $m>0$, the set of singular fibers of $\pi$ consists of  
exactly two multiple fibers of type $_m\mathrm{I}_0$. 

In the case $e=0$ and $m=0$, then $S\cong E\times \PP^1$, and hence
$\pi$ has no multiple fibers. 

In the case $e=-1$, 
the set of singular fibers of $\pi$ consists of exactly three multiple fibers of type $_2\mathrm{I}_0$. 
\end{thm}
\fi

Maruyama also considers the condition that elliptic ruled surfaces have an elliptic fibration 
\cite[Theorem 4]{Ma71}, in terms of elementary transformations of ruled surfaces.

\begin{rem}\label{rem:lambda}
Let $C_0$ be a section of $f$ satisfying $\mc{O}_S(C_0)\cong \mc{O}_{\PP (\mc{E})}(1)$ (see \cite[p.~373]{Ha77}),
$F$ be a general fiber of $\pi$, and $F_f$ a fiber of $f$.
Then \cite[V.~Corollary 2.11]{Ha77} tells us that 
$$
K_S\equiv -2C_0-eF_f,
$$
and by the canonical bundle formula of elliptic fibrations, we have
$$
K_S\equiv -\frac{2}{m}F
$$
in the case (i-2), and 
$$
K_S\equiv -\frac{1}{2}F
$$
in the case (ii).
Then, we can see that $F\cdot F_f=m$ (resp.~$F\cdot C_0=2$), and hence, 
 we have $\lambda_S=m$  (resp.~$\lambda_S=2$) in (i-2) (resp.~in (ii)).
\end{rem}




\section{Proof of Theorem \ref{thm:FMpartner}}\label{subsec:proof}
We give the proof of Theorem \ref{thm:FMpartner} in the last of this section. Before giving the proof, 
we need several claims.

Let us take a cyclic group $G=\mathbb{Z}/m\mathbb{Z}$ for an integer $m>1$ and a generator $g$ of $G$.
For integers $i\in (\Z /m\Z)^*$, 
define representations 
$$
\rho_{\PP ^1}  \colon G\to \Aut (\PP ^1)  \quad 
\mbox{ as }\quad 
\rho_{\PP ^1} (g)(y)=\zeta y,
$$
and
$$
\rho_{F,i} \colon G\to \Aut (F) \quad  
\mbox{ as }\quad 
\rho_{F,i} (g)(x)=T_{i\cdot a}x,
$$
where $a$ is an element of ${}_mF$, $T_a$ is the translation by $a$
 and $\zeta$ is a primitive $m$-th root of unity in $\C$.  
Let us consider the diagonal action
\begin{equation}\label{eqn:action}
\rho _i(:=\rho_{F,i}\times \rho_{\PP ^1}) \colon G\to \Aut (F\times \PP ^1)  
\end{equation}
induced by $\rho_{\PP ^1}$ and $\rho_{F,i}$.
Set
$$
S_i:=(F\times \PP ^1)/_i G,
$$ 
the quotient of $F\times \PP ^1$ by the action $\rho_i$.
Then we have the following commutative diagram:
\begin{align}\label{arr:quotient}
\xymatrix{ 
F \ar[d]_q &   F\times \PP ^1    \ar[l]_{p_1} \ar[r]^{p_2} \ar[d]^{q_i}  & \PP ^1  \ar[d]^{q_{\PP^1}}  \\
E:=F/\Span{a}        &  S_i  \ar[l]_{\qquad f_i} \ar[r]^{\pi_i\qquad}        & \PP ^1/G\cong \PP ^1 
}
\end{align}
Here, every vertical arrow is the quotient morphism of the action of $G$.
We can readily see that  $f_i$ is a $\PP^1$-bundle and $\pi_i$ is an elliptic fibration. 
Note that the quotient morphism $q$ 
does not depend on the choice of $i$, and that
 the left square in \eqref{arr:quotient} is a fiber product.
We can also see that $\pi_i$ has exactly 
two multiple fibers of type $_m\mathrm{I}_0$ over the branch points $q_{\PP^1}(0),q_{\PP^1}(\infty)$ of $q_{\PP^1}$, 
and it fits into the case  (i-2) in Theorem \ref{thm:TU14}. Consequently, there is a line bundle
$\mc L_i\in {}_m\hat{E}$ 
such that 
$$
S_i \cong \PP (\mc O_E\oplus \mc L_i)
$$ 
for each $i$.
Furthermore, because the left square in \eqref{arr:quotient} 
is a fiber product, we have
$q^*\mc L_i=\mc O_F$, which implies that $\Span {\mc L_i}=\Ker \hat{q}$
for the dual isogeny $\hat{q}\colon \hat{E}\to \hat{F}$ of $q$. 
Therefore, the subgroup $\Span {\mc L_i}$ of $\hat{E}$ does not depend on the choice of $i$.
In particular, we have an inclusion
\begin{equation}\label{eqn:FPm0}
\bigl\{ S_i  \bigm | i\in (\Z/m\Z)^* \bigr\}/\cong \
\hookrightarrow
\ \{ \PP (\mc O_E\oplus \mc L_1^{i}) \bigm| i\in (\Z/m\Z)^* \}/\cong.
\end{equation}
We will see below that these sets actually coincide by checking their cardinality.
Let us start the following claim.


\begin{cla}\label{cla:isom_ruled}
Take a line bundle $\mc{L}\in{}_m\hat{E}$. 
For $i, j\in (\Z /m\Z)^*$,
$\PP(\mc{O}_E\oplus \mc{L}^{i})\cong \PP(\mc{O}_E\oplus \mc{L}^{j})$ 
if and only if 
there is a group automorphism $\psi_1\in \Aut_0 E$ such that $\psi_1^*\mc{L}\cong \mc{L}^{\pm i^{-1}j}$ holds.
Consequently,  the cardinality of the right hand side of \eqref{eqn:FPm0} is $\varphi(m)/|H_{\hat{E}}^{\mc{L}_1}|$.
\end{cla}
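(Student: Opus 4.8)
The plan is to reduce the isomorphism question for the $\PP^1$-bundles $\PP(\mc O_E\oplus\mc L^i)$ to a statement about the $G$-actions that produced them, and then to a statement about $\Aut_0 E$ acting on the $m$-torsion of $\hat E$. First I would recall the standard fact (from \cite[V]{Ha77}) that two geometrically ruled surfaces $\PP(\mc E)\to E$ and $\PP(\mc E')\to E$ are isomorphic \emph{as varieties} if and only if there is an automorphism $\psi$ of $E$ and a line bundle $\mc N$ on $E$ with $\psi^*\mc E'\cong \mc E\otimes\mc N$; since any such isomorphism of surfaces must carry the ruling $f'$ to the ruling $f$ (the ruling is intrinsic, being the unique $\PP^1$-fibration when $E$ is not rational), this is genuinely an ``if and only if''. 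Applying this with $\mc E=\mc O_E\oplus\mc L^i$ and $\mc E'=\mc O_E\oplus\mc L^j$, one tensors to normalize and finds that $\PP(\mc O_E\oplus\mc L^i)\cong\PP(\mc O_E\oplus\mc L^j)$ iff $\psi^*(\mc O_E\oplus\mc L^j)\cong\mc O_E\oplus\mc L^i$ for some $\psi\in\Aut E$. Writing $\psi=T_b\circ\psi_1$ with $\psi_1\in\Aut_0 E$ and $T_b$ a translation, and noting $T_b^*$ acts trivially on $\hat E=\Pic^0 E$, one gets $\psi_1^*\mc L^j\in\{\mc O_E,\mc L^i\}$ as unordered pairs $\{\mc O_E,\psi_1^*\mc L^j\}=\{\mc O_E,\mc L^i\}$; since $j$ is a unit, $\psi_1^*\mc L^j\ne\mc O_E$, so necessarily $\psi_1^*\mc L^j\cong\mc L^i$, i.e.\ $\psi_1^*\mc L\cong\mc L^{i j^{-1}}$. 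Accounting for the fact that swapping the two summands $\mc O_E$ and $\mc L^j$ (equivalently tensoring $\mc O_E\oplus\mc L^j$ by $\mc L^{-j}$) gives the same $\PP^1$-bundle, one also allows $\psi_1^*\mc L\cong\mc L^{-i j^{-1}}$; combining, the condition is $\psi_1^*\mc L\cong\mc L^{\pm i^{-1}j}$ for some $\psi_1\in\Aut_0 E$, which is the first assertion.

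For the cardinality count, I would observe that by the definition of $H^{\mc L}_{\hat E}$ recalled just before Lemma \ref{lem:F=E}, the displayed condition ``$\exists\psi_1\in\Aut_0 E$ with $\psi_1^*\mc L\cong\mc L^{\pm i^{-1}j}$'' says exactly that $\pm i^{-1}j\in H^{\mc L}_{\hat E}$, i.e.\ that $i$ and $j$ lie in the same coset of $H^{\mc L}_{\hat E}$ in $(\Z/m\Z)^*$ (using that $-1\in H^{\mc L}_{\hat E}$ always, so the two signs do not enlarge anything). Hence the isomorphism classes among $\{\PP(\mc O_E\oplus\mc L^i)\mid i\in(\Z/m\Z)^*\}$ are in bijection with the cosets $(\Z/m\Z)^*/H^{\mc L}_{\hat E}$, of which there are $\varphi(m)/|H^{\mc L}_{\hat E}|$. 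Taking $\mc L=\mc L_1$ gives the stated count for the right-hand side of \eqref{eqn:FPm0}.

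The main obstacle, I expect, is the bookkeeping around ``$\PP(\mc E)$ depends only on $\mc E$ up to twist, and isomorphisms of ruled surfaces over $E$ come from $\Aut E$ together with a twist'' — specifically being careful that (a) every isomorphism $S_i\cong S_j$ respects the rulings (so that it descends to an automorphism of the base $E$), and (b) the two normalizations $\mc O_E\oplus\mc L^j$ and $\mc O_E\oplus\mc L^{-j}$ of the same bundle are correctly identified, so that the $\pm$ in the exponent is neither lost nor spuriously doubled. Once those two points are handled, the translation to $H^{\mc L}_{\hat E}$ and the coset count are immediate. I would also remark in passing that $\mc L\in{}_m\hat E$ is needed for $\mc L^i$ to depend only on $i\bmod m$, so the indexing set $(\Z/m\Z)^*$ makes sense; here $\mc L=\mc L_1$ indeed has order $m$ since $\PP(\mc O_E\oplus\mc L_1)$ has exactly the two multiple fibers of type ${}_m\mathrm{I}_0$ recorded above.
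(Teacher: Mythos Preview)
Your argument is correct and follows essentially the same route as the paper: use uniqueness of the ruling to descend an isomorphism to $\psi\in\Aut E$, invoke the Hartshorne criterion $\PP(\mc E)\cong\PP(\mc E')$ iff $\mc E'\cong\mc E\otimes\mc N$ (the paper cites \cite[II.~Ex.~7.9(b)]{Ha77} rather than Chapter~V), and read off the condition on $\psi_1^*\mc L$; your explicit reduction from $\Aut E$ to $\Aut_0 E$ via the translation decomposition and the coset count are details the paper leaves implicit. The one slip is that your computation yields $\psi_1^*\mc L\cong\mc L^{\pm ij^{-1}}$ rather than $\mc L^{\pm i^{-1}j}$, but since $H^{\mc L}_{\hat E}$ is a subgroup (so closed under inverses), the two existential conditions are equivalent and the cardinality count is unaffected.
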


\begin{proof}
Since each of $\PP(\mc{O}_E\oplus \mc{L}^{i})$ and $\PP(\mc{O}_E\oplus \mc{L}^{j})$ has a unique $\PP^1$-bundle structure,
any isomorphism $\psi\colon \PP(\mc{O}_E\oplus \mc{L}^{i})\to \PP(\mc{O}_E\oplus \mc{L}^{j})$
induces an automorphism $\psi_1$ of $E$, which is compatible with $\psi$. 
We can see by \cite[II.~Ex.~7.9(b)]{Ha77} that $\psi_1$ satisfies the desired property.
The opposite direction also follows from [ibid.]. 
\end{proof}
We also have the following.


\begin{cla}\label{cla:S_iS_j}
For $i, j\in (\Z /m\Z)^*$,
$S_i\cong S_j$ if and only if there is a group automorphism $\phi_1\in \Aut_0 F$ such that $\phi_1(a)=(\pm i^{-1}j)\cdot a$ holds.
Consequently,  the cardinality of the left hand side of \eqref{eqn:FPm0} is $\varphi(m)/|H_F^a|$.
\end{cla}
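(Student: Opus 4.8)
### Proof proposal

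\textbf{Strategy.} The plan is to reduce the statement to the description of isomorphisms of $\PP^1$-bundles over $F$ (and over $E$), exploiting the fact that the quotient constructions $S_i=(F\times\PP^1)/_iG$ and the product $F\times\PP^1$ itself fit into the fiber-product diagram \eqref{arr:quotient}. The key point is that $F\times\PP^1$ is the trivial $\PP^1$-bundle $\PP(\str_F\oplus\str_F)$ over $F$, and the $G$-action $\rho_i$ on it is by $\rho_{F,i}$ on the base (translation by $i\cdot a$) and by $\rho_{\PP^1}$ on the fibers; so $S_i\cong S_j$ should be governed exactly by automorphisms of $F$ carrying the action $\rho_{F,i}$ to $\rho_{F,j}$ in a way that lifts to the total space, and such automorphisms of $F$ are precisely translations composed with elements of $\Aut_0 F$ sending $i\cdot a$ to $\pm j\cdot a$.

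\textbf{Step 1 (from an isomorphism $S_i\cong S_j$ to an automorphism of $F$).} Each $S_i$ is of type (i-2) in Theorem \ref{thm:TU14}, hence has a \emph{unique} $\PP^1$-bundle structure $f_i\colon S_i\to E$ (just as in the proof of Claim \ref{cla:isom_ruled}), so any isomorphism $\Psi\colon S_i\to S_j$ descends to an automorphism $\psi_1$ of $E$. Moreover $\psi_1$ permutes the two multiple fibers of $\pi_i$, which lie over the branch locus of $q_{\PP^1}$; pulling $\psi_1$ back along $q\colon F\to E$ (possible because the left square of \eqref{arr:quotient} is a fiber product and $\Span{\mc L_i}=\Ker\hat q$ is independent of $i$) yields an automorphism $\phi$ of $F$ normalizing $\Span a$. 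Composing with a translation I may assume $\phi\in\Aut_0 F$, and tracking which multiple fiber goes where forces $\phi(i\cdot a)=\pm j\cdot a$, i.e. $\phi(a)=(\pm i^{-1}j)\cdot a$; set $\phi_1:=\phi$ (up to the sign/inverse bookkeeping). The converse direction is the construction: given $\phi_1\in\Aut_0 F$ with $\phi_1(a)=(\pm i^{-1}j)\cdot a$, the map $\phi_1\times(\text{id or }y\mapsto 1/y)$ on $F\times\PP^1$ is $G$-equivariant for $(\rho_i,\rho_j)$ — here the sign $\pm$ corresponds to possibly composing $\rho_{\PP^1}$ with $y\mapsto 1/y$, which inverts $\zeta$ — hence descends to an isomorphism $S_i\cong S_j$. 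I would phrase this last part through the descent of equivariant isomorphisms, the same mechanism used implicitly for the diagram \eqref{arr:quotient}.

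\textbf{Step 2 (counting).} Granting the equivalence, $S_i\cong S_j$ iff $i^{-1}j\in H_F^a$ in $(\Z/m\Z)^*$ (recall $H_F^a=\{k\mid \exists\phi\in\Aut_0 F,\ \phi(a)=k\cdot a\}$, and $-1\in H_F^a$ always via $\phi=-1$, so the $\pm$ is absorbed). Thus the left-hand set of \eqref{eqn:FPm0} is the quotient $(\Z/m\Z)^*/H_F^a$, which has $\varphi(m)/|H_F^a|$ elements. This matches, via Lemma \ref{lem:F=E} (which gives $H_F^a=H_{\hat E}^{\mc L_1}$ since $\Span{\mc L_1}=\Ker\hat q$), the count $\varphi(m)/|H_{\hat E}^{\mc L_1}|$ from Claim \ref{cla:isom_ruled}, so the inclusion \eqref{eqn:FPm0} is an equality — which is the point of the remark following it.

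\textbf{Main obstacle.} The routine part is the counting; the delicate part is Step 1, specifically checking that an abstract isomorphism $S_i\cong S_j$ really does lift to a $G$-equivariant isomorphism of $F\times\PP^1$ realizing the desired $\phi_1$, rather than merely producing some automorphism of $E$ with an a priori weaker compatibility. The clean way is to pass to fundamental groups / the associated $\Z/m$-covers: $F\times\PP^1\to S_i$ is the pullback of a cyclic cover and is characterized by its branch data (the two $_m\mathrm I_0$ fibers and the monodromy), so an isomorphism $S_i\cong S_j$ matching branch data lifts canonically; then one only has to read off the action on $F$. I expect to invoke \cite[II.~Ex.~7.9]{Ha77} once more to pin down that the induced $\phi_1$ acts on the relevant line bundle (equivalently on $a$) exactly by $\pm i^{-1}j$, mirroring the end of the proof of Claim \ref{cla:isom_ruled}.
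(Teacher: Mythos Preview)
Your approach is essentially the paper's: descend $\Psi$ to $\psi_1\in\Aut E$ via the unique $\PP^1$-bundle structure, lift to $\phi_1\in\Aut F$ (the paper does this by noting $\hat\psi_1$ preserves $\Span{\mc L_1}=\Ker\hat q$, hence $\psi_1$ lifts to $F\cong\widehat{\hat E/\Span{\mc L_1}}$), then use the fiber-product square to lift $\Psi$ itself to $\phi=\phi_1\times\phi_2$ on $F\times\PP^1$, and finally read off the condition on $a$ from $G$-equivariance; the converse and the count are exactly as you say. One sharpening: your phrase ``tracking which multiple fiber goes where forces $\phi(i\cdot a)=\pm j\cdot a$'' is not the actual mechanism---the two multiple fibers alone do not see $i$ versus $j$---and the paper instead writes the equivariance as $\phi\circ\rho_i(g)=\rho_j(g^k)\circ\phi$, observes on the $\PP^1$ factor that $\phi_2(\zeta y)=\zeta^k\phi_2(y)$ forces $k\in\{1,m-1\}$ with $\phi_2(y)=\lambda y$ or $\lambda/y$, and then the $F$ factor gives $\phi_1(i\cdot a)=k j\cdot a$, i.e.\ $\phi_1(a)=(\pm i^{-1}j)\cdot a$, which is precisely what you anticipate in your ``main obstacle'' paragraph.
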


\begin{proof}
Suppose that there is an isomorphism $\psi\colon S_i\to S_j$.
As in the proof of Claim \ref{cla:isom_ruled}, 
$\psi$ induces an automorphism $\psi_1$ of $E$ which is compatible with $\psi$.
It is also satisfied that the dual isogeny $\hat{\psi_1}$ 
preserves the subgroup $\Span{\mc{L}_1}$ of $\hat{E}$, and hence 
$\psi_1$ lifts an automorphism $\phi_1$ of $F\cong \hat{F}\cong E/\Span{\mc{L}_1}$.  
Since the left square in \eqref{arr:quotient} 
is a fiber product, 
$\psi$ lifts to an automorphism $\phi$ of $F\times \PP^1$. 
We can see that $\phi$ is of the form 
$
\phi_1 \times \phi_2
$
for some $\phi_2 \in \Aut \PP^1$. 
Since any translation on $F$ descends to a translation on $E$, replacing $\phi_1$ if necessary, 
we may assume that $\phi_1\in \Aut_0 F$. 
Since $\phi$ descends to $\psi$, it should satisfy 
$$
\phi\circ\rho_i(g)= \rho_j(g^k)\circ \phi 
$$
for any $g\in G$ and some $k\in\Z$. By observing the action on $\PP^1$, we know that $k=1$ or $m-1$, and moreover  
\begin{align*}
\phi_2(y)=
\begin{cases}
\lambda y \qquad &\text{ (in the case $k=1$)}\\
\lambda/y \qquad &\text{ (in the case $k=m-1$)}
\end{cases}
\end{align*}
for $y\in \PP ^1$ and some $\lambda \in \C^*$.
In the former case, we obtain that 
$
\phi_1(a)=(i^{-1}j)\cdot a
$ 
holds, and in the latter case,
 $\phi_1(a)=(-i^{-1}j)\cdot a$
holds. 
\end{proof}
For $m\le 3$, we can easily see from Claims \ref{cla:isom_ruled} and \ref{cla:S_iS_j}
that the both side of \eqref{eqn:FPm0} coincide.
And hence, suppose that $m>3$. Then, it follows from  
 Lemma \ref{lem:F=E}, Claims \ref{cla:isom_ruled} and \ref{cla:S_iS_j} that
the both side of \eqref{eqn:FPm0} coincide:
%
\begin{align}\label{ali:FPm}
&\bigl\{ S_i  \bigm | i\in (\Z/m\Z)^* \bigr\}/\cong \notag\\
=&\{ \PP (\mc O_E\oplus \mc L_1^{i}) \bigm| i\in (\Z/m\Z)^* \}/\cong.
\end{align} 
The cardinality of this set is 
$\varphi(m)/|H_{\hat{E}}^{\mc{L}_1}|$.


\begin{cla}\label{cla:FM_partner}
In the above notation,
$S_i\cong J_{S_1}(i)$ for all $i$ with $i\in (\Z/m\Z)^*$. 
\end{cla}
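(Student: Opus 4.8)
The plan is to realize $S_i$ as the fine moduli space $J_{S_1}(i)$ by descending an explicit relative Fourier--Mukai transform from the trivial elliptic family $F\times\PP^1\to\PP^1$ to the quotients of diagram~\eqref{arr:quotient}. First one checks the Jacobian is defined: since $S_1=\PP(\str_E\oplus\mc L_1)$ with $\mc L_1\in{}_m\hat{E}$ lies in case (i-2) of Theorem~\ref{thm:TU14}, Remark~\ref{rem:lambda} gives $\lambda_{S_1}=m$, so for $i\in(\Z/m\Z)^*$ the surface $J_{S_1}(i)=J_{S_1}(1,i)$ is a well-defined smooth projective relatively minimal elliptic surface over $\PP^1$, carrying a universal sheaf whose members over the smooth locus $U:=\PP^1\setminus\{q_{\PP^1}(0),q_{\PP^1}(\infty)\}$ are stable degree-$i$ line bundles on the fibres of $\pi_1$ (see \S\ref{subsec:bridgeland}).

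Next I construct the transform. On $(F\times\PP^1)\times_{\PP^1}(F\times\PP^1)\cong F\times F\times\PP^1$, viewed inside $(F\times\PP^1)^2$ via $(x,x',t)\mapsto((x,t),(x',t))$, consider
\[
\mc Q_i:=\bigl(\mc P_0\otimes\mathrm{pr}_1^*\str_F(iO)\bigr)\boxtimes\str_{\PP^1},
\]
where $\mathrm{pr}_1$ projects to the first factor $F$. Since $\mc Q_i|_{F\times\{x'\}\times\{t\}}\cong\str_F(x'+(i-1)O)$ has degree $i$, the integral functor $\Phi^{\mc Q_i}$ is a relative Fourier--Mukai transform over $\PP^1$ (fibrewise it is the classical Fourier--Mukai transform of $F$ followed by $\otimes\,\str_F(iO)$, hence an equivalence). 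The essential point is that $\mc Q_i$ is invariant under the diagonal action of $G$ given by $\rho_1$ on the first copy and $\rho_i$ on the second: on the $\PP^1$-factors both act by $\zeta$ and $\mc Q_i$ is pulled back from $\PP^1$, while $(T_a\times T_{i\cdot a})^*\bigl(\mc P_0\otimes\mathrm{pr}_1^*\str_F(iO)\bigr)\cong\mc P_0\otimes\mathrm{pr}_1^*\str_F(iO)$ because, by the seesaw theorem and the behaviour of $\mc P_0$ under translations, the twist $\str_F(iO)$ on the first $F$ is exactly cancelled by the translation by $i\cdot a$ on the second $F$. By the descent of $G$-equivariant Fourier--Mukai transforms (\cite{BM98}), $\Phi^{\mc Q_i}$ descends along $q_1$ and $q_i$ to a Fourier--Mukai transform between $D(S_1)$ and $D(S_i)$ whose kernel is a sheaf $\mc U_i$ on $S_i\times S_1$; in particular $S_i$ is a Fourier--Mukai partner of $S_1$, so $S_i\cong J_{S_1}(b)$ for some $b$ coprime to $m$ by Theorem~\ref{BMelliptic}.

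It remains to see $b=i$. For $P$ a point of a smooth fibre of $\pi_i$, the restriction $\mc U_i|_{\{P\}\times S_1}$ is the push-forward to $S_1$ of $\mc Q_i$ restricted to $(F\times\PP^1)\times\{\tilde P\}$, i.e.\ a stable degree-$i$ line bundle on the corresponding fibre of $\pi_1$; thus $\mc U_i|_U$ is a flat family of objects of $J_{S_1}(i)$, and the associated classifying morphism $S_i|_U\to J_{S_1}(i)|_U$ is fibrewise the isomorphism $F\xrightarrow{\sim}\Pic^i(F)$, $x'\mapsto\str_F(x'+(i-1)O)$, hence an isomorphism over $U$. Since $S_i=\PP(\str_E\oplus\mc L_i)$ and $J_{S_1}(i)$ are both relatively minimal elliptic surfaces over $\PP^1$ agreeing over the dense open $U$, the resulting birational map over $\PP^1$ extends to an isomorphism --- a birational map between relatively minimal elliptic surfaces that respects the fibrations and restricts to an isomorphism over a dense open of the base is an isomorphism --- giving $S_i\cong J_{S_1}(i)$ and $b=i$.

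The main obstacle is the $G$-equivariance computation: one must verify invariance of $\mc Q_i$ for \emph{precisely} the pair $(\rho_1,\rho_i)$ and check that the degree-$i$ twist on the source exactly balances the translation by $i\cdot a$ on the target. A misplaced twist or the opposite sign would descend to $J_{S_1}(i^{-1})$ or $J_{S_1}(-i)$, and by Claim~\ref{cla:S_iS_j} only $S_{-i}\cong S_i$ holds without extra hypotheses on $F$ and $a$, so the bookkeeping genuinely matters; the extension across the two multiple fibres at the end is routine but must be invoked together with the fibration structure, since $S_i$ and $J_{S_1}(i)$ are only ruled.
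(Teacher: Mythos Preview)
Your overall strategy---build an explicit $G$-invariant kernel on $(F\times\PP^1)^2$, descend it to $S_1\times S_i$, then use the universal property of $J_{S_1}(i)$ over the smooth locus together with relative minimality of both elliptic fibrations to extend---is exactly the paper's approach. The problem is with the kernel itself: your sheaf $\mc Q_i=(\mc P_0\otimes\mathrm{pr}_1^*\str_F(iO))\boxtimes\str_{\PP^1}$ is \emph{not} invariant under $T_a\times T_{i\cdot a}$ on $F\times F$. The translation behaviour of the Poincar\'e bundle gives
\[
(T_a\times T_{i\cdot a})^*\mc P_0\;\cong\;\mc P_0\otimes\mathrm{pr}_1^*\str_F(a-O)^{\,i}\otimes\mathrm{pr}_2^*\str_F(a-O),
\]
while pulling back $\mathrm{pr}_1^*\str_F(iO)$ along $T_a$ contributes a factor $\mathrm{pr}_1^*\str_F(a-O)^{-i}$. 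Thus the $\mathrm{pr}_1^*$ contributions do cancel, as you assert, but a factor $\mathrm{pr}_2^*\str_F(a-O)$ survives, so $(T_a\times T_{i\cdot a})^*\mc Q_i\cong\mc Q_i\otimes\mathrm{pr}_2^*\str_F(a-O)\not\cong\mc Q_i$. Equivalently, on restriction to $\{x\}\times F$ your bundle is $\str_F(x-O)$ of degree $0$, which is translation-invariant, whereas the pullback restricts to $\str_F((x\oplus a)-O)$. So the descent step fails outright; this is not the sign ambiguity you warn about at the end, but a failure of the invariance itself.

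The paper's fix is to twist on \emph{both} factors: choose $j$ with $ij\equiv 1\pmod m$ and take
\[
\mc P:=\mc P_0\otimes\mathrm{pr}_1^*\str_F(iO)\otimes\mathrm{pr}_2^*\str_F(jO).
\]
Then $(T_a\times T_{i\cdot a})^*\mathrm{pr}_2^*\str_F(jO)$ contributes $\mathrm{pr}_2^*\str_F(a-O)^{-ij}$, and since $\ord(a)=m$ this equals $\mathrm{pr}_2^*\str_F(a-O)^{-1}$, killing the leftover term. With this corrected kernel the rest of your argument (seesaw, descent to a sheaf on $S_1\times S_i$, identifying degree-$i$ line bundles on smooth fibres, and extending via \cite[Proposition~III.8.4]{BHPV}) is essentially the paper's proof.
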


\begin{proof}
Take an element $j\in (\Z/m\Z)^*$ such that 
$
ij=1.
$
Henceforth, we identify $F$ and $\hat{F}$ as group schemes by \eqref{eqn:dual}.
For the normalized Poincare bundle $\mc P_0$  given in \S \ref{subsec:technical},
we define 
$$
\mc P:=\mc P_0\otimes p_1^*\mc O_F(iO)\otimes p_2^*\mc O_F(jO).
$$ 
Here, we regard elements $i,j\in (\Z/m\Z)^*$ as integers satisfying $1\le i,j \le m-1$.
Then the line bundle $\mc P$ satisfies
\begin{equation}\label{eqn:P|_y}
\mc P|_{x\times F}\cong \mc O_F(x+(j-1)O)
\mbox{ and }
\mc P|_{F\times y}\cong \mc O_F(y+(i-1)O)
\end{equation}
for any $x,y\in F$.
Let us consider the commutative diagram:
\[ 
\xymatrix{ 
x\times F \ar@{^{(}->}[r] \ar[d]_{T_{i\cdot a}} &   F\times F    \ar[d]_{T_a\times T_{i\cdot a}} 
 & F\times y  \ar[d]_{T_a}  \ar@{_{(}->}[l]  \\
(x\oplus a)\times F\ar@{^{(}->}[r] &  F\times F  & F\times (y\oplus i\cdot a)  \ar@{_{(}->}[l] 
}
\]
Here, the left vertical morphism is defined by the composition of morphisms
$$
 x\times F\cong F \stackrel{T_{i\cdot a}}\longrightarrow  F\cong (x\oplus a)\times F
$$
and  similarly, the right vertical arrow is also defined by $T_a$. 
Now we have 
\begin{align*}
((T_a\times T_{i\cdot a})^*\mc P)|_{F\times y}
&\cong T_a^*(\mc P|_{F\times (y\oplus i\cdot a)})\\
&\cong\mc P|_{F\times (y\oplus i\cdot a)}\otimes \mc O_F(a-O)^{-i}\\
&\cong\mc O_F(y+i a-O) \otimes \mc O_F(a-O)^{-i}\\
&\cong\mc O_F(y+(i-1)O)\\
&\cong\mc P|_{F\times y}.
\end{align*}
Using $\ord a=m$, we also have 
\begin{align*}
((T_a\times T_{i\cdot a})^*\mc P)|_{x\times F}
&\cong T_{i\cdot a}^*(\mc P|_{(x\oplus a)\times F})\\
&\cong\mc P|_{(x\oplus a)\times F}\otimes \mc O_F(i a-iO)^{-j}\\
&\cong\mc O_F(x+a+(j-2)O) \otimes \mc O_F(ia-iO)^{-j}\\
&\cong\mc O_F(x+(j-1)O)\\
&\cong\mc P|_{x\times F}.
\end{align*} 
Hence, we obtain 
$(T_a\times T_{i\cdot a})^*\mc P\cong \mc P$
by \cite[III.~Ex.~12.4]{Ha77}.
Let us define $\Delta _{\PP ^1}(\cong \PP^1)$ to be the diagonal in $\PP ^1 \times \PP ^1$.
For the projection  
$$p_{1}\colon (F \times F) \times \Delta _{\PP ^1} \to F\times F,$$
define a sheaf
$$\mc U:=p_{1}^*\mc P.$$
We regard $\mc{U}$ as a sheaf on $(F\times \PP^1)\times (F\times \PP^1)$. 
Then for any $g\in G$, we have 
\begin{align*}
((\rho_1 (g)\times \rho_i (g))^*\mc U)|_{x \times \PP^1 \times y \times \PP^1}
&\cong (\rho_{\PP ^1} (g)\times  \rho_{\PP ^1} (g))^*(\mc U|_{(x\oplus a)\times \PP^1 \times (y\oplus i\cdot a)\times \PP^1})\\
&\cong \rho_{\PP ^1}(g)^*\mc O_{\Delta _{\PP ^1}}\\
&\cong \mc O_{\Delta _{\PP ^1}}\\
&\cong \mc U|_{x \times \PP^1 \times y \times \PP^1}
\end{align*}
for $x,y\in F$, and 
\begin{align*}
((\rho_1 (g)\times \rho_i (g))^*\mc U)|_{F \times z\times F \times z}
&\cong (T_a\times T_{i\cdot a})^*(\mc U|_{F \times \zeta z\times F \times \zeta z})\\
&\cong (T_a\times T_{i\cdot a})^* \mc P\\
&\cong \mc P\\
&\cong \mc U|_{F \times z\times F \times z}
\end{align*}
for any $z\in \PP^1$, and note that
both of 
$
((\rho_1 (g)\times \rho_i (g))^*\mc U)|_{F \times z_1\times F \times z_2}
$ 
and
$
\mc U|_{F \times z_1\times F \times z_2}
$
are zero  for $z_1\ne z_2\in \PP^1$, since $\Supp \mc{U}= (F \times F) \times \Delta _{\PP ^1}$.
Then, by \cite[III.~Ex.~12.4]{Ha77}
 we can check  
$
(\rho_1 (g)\times \rho_i (g))^*\mc U\cong \mc U,
$
equivalently
$$
(\rho_1 (g)\times  {\id}_{F\times \PP^1})^*\mc U\cong ( {\id}_{F\times \PP^1}\times \rho_i (g^{-1}))^*\mc U.
$$
This implies that 
$$
(q_{1}\times {\id}_{F\times \PP^1})_*\mc{U}\cong ({\id}_{S_1}\times \rho_i(g^{-1}))^* 
(q_{1}\times {\id}_{F\times \PP^1})_*\mc{U},
$$
that is, the sheaf
$
 (q_{1}\times {\id}_{F\times \PP^1})_*\mc{U}
$
is $G$-invariant with respect to the diagonal action of $G$ on $S_1\times (F\times \PP^1)$, 
induced by the trivial action on $S_1$ and $\rho_i$ on $F\times \PP^1$.
Since $G$ is cyclic, we can conclude that
$
 (q_{1}\times {\id}_{F\times \PP^1})_*\mc{U}
$
is $G$-equivariant, and hence there is a coherent sheaf  $\mc{U}'$ on $S_1\times S_i$ such that
\begin{equation}\label{eqn:descent}
 (q_{1}\times {\id}_{F\times \PP^1})_*\mc{U}\cong  ( {\id}_{S_1}\times q_i)^*\mc{U}'.
\end{equation}
For $x\times z\in F\times \PP^1$, we have $\mc{U}|_{F\times z\times x\times z}\cong \mc{P}|_{F\times x}$, which is 
a  line bundle of degree $i$ on $F$ by \eqref{eqn:P|_y}.
The isomorphism (\ref{eqn:descent}) yields 
$$
\mc{U}'|_{S_1\times q_i(x\times z)}\cong ((q_{1}\times {\id}_{F\times \PP^1})_*\mc{U})|_{S_1\times (x\times z)}
\cong (q_{1}\times {\id}_{F\times \PP^1})_* (\mc{U}|_{ (F\times \PP^1)\times (x\times z)}).
$$
Here, the second isomorphism follows from \cite[Lemma 1.3]{BO95} and the smoothness of $q_1$.
Since $\mc{U}|_{(F\times \PP^1)\times (x\times z)}$ is actually a sheaf on $F\times z \times x\times z$ and 
 the restriction $q_1|_{F\times z}$  is isomorphic for $z\in \PP^1\backslash \{0,\infty\}$,
 $\mc{U}'|_{S_1\times {q_i(x\times z)}}$ is 
also a line bundle of degree $i$ on $F_z\times q_i(x\times z)$ for such $z$. 
Here, $F_z(\cong F)$ is a fiber of $\pi_1$ over the point $q_{\PP ^1}(z)$. 
Then, by the universal property of $J_{S_1}(i)$,
there is a morphism between the open subsets of $S_i$ and $J_{S_1}(i)$ over $\PP^1\backslash\{q_{\PP^1}(0),q_{\PP^1}(\infty)\}$.
Since $\mc{U}'|_{S_1\times q_i(x\times z)}\not\cong\mc{U}'|_{S_1\times q_i(y\times z)}$ on $F_z$ for $x\ne y\in S_i$, this morphism is injective, and hence  
$S_i$ and $J_{S_1}(i)$ are birational over $\PP^1$.
Then, \cite[Proposition III.~8.4]{BHPV} implies the result.
\end{proof}

Now, we obtain  the following.


\begin{prop}\label{prop:FM_P(O)}
Let $E$ be an elliptic surface, and define $S$ to be an elliptic ruled surface $\PP (\mc{O}_E\oplus \mc{L})$
for a line bundle $\mc{L}\in{}_m\hat{E}$ for $m>0$. 
Then we have
$$
\FM (S)=\{\PP (\mc{O}_E\oplus \mc{L}^i)\mid i\in (\Z/m\Z)^*\}/\cong.
$$
This set consists of $\varphi(m)/|H_{\hat{E}}^{\mc{L}}|$ elements. In the case $m>3$, the cardinality $|H_{\hat{E}}^{\mc{L}}|$ is $2,4$ or $6$, 
depending on the choice of $\hat{E}$ and $\mc{L}$. 
\end{prop}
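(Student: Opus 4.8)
The plan is to recognize $S=\PP(\mc{O}_E\oplus\mc{L})$ as one of the quotient surfaces $S_i$ built above, so that the shape of $\FM(S)$ falls out of Claim \ref{cla:FM_partner} together with \eqref{ali:FPm}. The case $m=1$ is degenerate but immediate: then $\mc{L}=\mc{O}_E$, $S=E\times\PP^1$, $\lambda_S=1$, both sides of the claimed equality collapse to $\{S\}$, and $\varphi(1)/|H_{\hat{E}}^{\mc{O}_E}|=1$. So I assume $m\ge 2$ henceforth.

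First I would manufacture the input data for the construction of \S\ref{subsec:proof} out of $\mc{L}$ itself. Let $\hat{q}\colon\hat{E}\to\hat{E}/\Span{\mc{L}}=:\hat{F}$ be the quotient isogeny, $q\colon F\to E$ its dual, and $a$ a generator of $\Ker q$. Since $\mc{L}$ has order exactly $m$, the subgroup $\Span{\mc{L}}$, and hence its Cartier dual $\Ker q$, is cyclic of order $m$; thus $\ord a=m$ and $E\cong F/\Span{a}$ via $q$. Feeding this pair $(F,a)$ into the construction of \S\ref{subsec:proof} produces surfaces $S_i$ and line bundles $\mc{L}_i\in{}_m\hat{E}$ with $S_i\cong\PP(\mc{O}_E\oplus\mc{L}_i)$ and $\Span{\mc{L}_i}=\Ker\hat{q}=\Span{\mc{L}}$ for every $i\in(\Z/m\Z)^*$. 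In particular each $\mc{L}_i$ has order $m$, so every $S_i$ is of type (i-2) in Theorem \ref{thm:TU14}, whence $\lambda_{S_i}=m$ by Remark \ref{rem:lambda}. As $i$ ranges over $(\Z/m\Z)^*$, the bundles $\mc{L}_1^i$ exhaust the generators of $\Span{\mc{L}}$, which are precisely the $\mc{L}^i$; hence $S=\PP(\mc{O}_E\oplus\mc{L})$ occurs among the $\PP(\mc{O}_E\oplus\mc{L}_1^i)$, and by \eqref{ali:FPm} (or the discussion preceding it when $m\le 3$) it is isomorphic to $S_j$ for some $j\in(\Z/m\Z)^*$.

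Next I would assemble the conclusion. By Claim \ref{cla:FM_partner}, $S\cong S_j\cong J_{S_1}(j)$ with $(j,\lambda_{S_1})=(j,m)=1$, so Theorem \ref{BMelliptic} shows $S$ is a Fourier--Mukai partner of $S_1$; since being Fourier--Mukai partners is an equivalence relation, $\FM(S)=\FM(S_1)$. Applying Theorem \ref{BMelliptic} to $S_1$, reducing fibre degrees modulo $\lambda_{S_1}=m$ via \eqref{eqn:JSB}, and using Claim \ref{cla:FM_partner} once more gives
\[
\FM(S)=\FM(S_1)=\{J_{S_1}(i)\mid i\in(\Z/m\Z)^*\}/\cong\,=\,\{S_i\mid i\in(\Z/m\Z)^*\}/\cong,
\]
which by \eqref{ali:FPm} and the previous paragraph equals $\{\PP(\mc{O}_E\oplus\mc{L}^i)\mid i\in(\Z/m\Z)^*\}/\cong$, as asserted. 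For the count, the cardinality recorded just after \eqref{ali:FPm} is $\varphi(m)/|H_{\hat{E}}^{\mc{L}_1}|$; since $H_{\hat{E}}^{\mc{M}}$ depends only on the cyclic subgroup $\Span{\mc{M}}$, I may replace $\mc{L}_1$ by $\mc{L}$. Finally, for $m>3$: Lemma \ref{lem:F=E} applies because $\Ker\hat{q}=\Span{\mc{L}}$ and gives $H_{\hat{E}}^{\mc{L}}=H_F^a$, while Lemma \ref{lem:j-invariant} says exactly one of its cases (i), (ii), (iii) occurs, in which $|H_F^a|$ equals $2$, $4$ or $6$ respectively.

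The genuinely substantive inputs --- Claim \ref{cla:FM_partner} and Lemmas \ref{lem:j-invariant}--\ref{lem:F=E} --- are already available, so the remaining work is essentially bookkeeping. The one place that needs care is the bridge itself: choosing $F$ and $a$ through the dual isogeny so that the family $\{S_i\}$ genuinely sweeps out all of $\{\PP(\mc{O}_E\oplus\mc{L}^i)\}$, and keeping track of the sign and inverse ambiguities recorded by $H_F^a$ and $H_{\hat{E}}^{\mc{L}}$.
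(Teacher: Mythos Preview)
Your proposal is correct and follows essentially the same route as the paper. The paper's proof is terse (``direct consequence of Theorem \ref{BMelliptic}, \eqref{ali:FPm} and Claim \ref{cla:FM_partner}''), but the implicit step you make explicit---constructing $F$ as the dual of $\hat{E}/\Span{\mc{L}}$ so that $S$ is realized as some $S_j$---is exactly the bridge the author has in mind; your handling of the $m=1$ case and the observation that $H_{\hat{E}}^{\mc{L}}$ depends only on $\Span{\mc{L}}$ are appropriate bits of bookkeeping the paper leaves to the reader.
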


\begin{proof}
The first statement is a direct consequence of  Theorem \ref{BMelliptic}, 
the equation \eqref{ali:FPm} and Claim \ref{cla:FM_partner}.
The second  is a direct consequence of Claim \ref{cla:isom_ruled}. We can compute the cardinality of 
$H_{\hat{E}}^{\mc{L}}$ by Lemmas \ref{lem:j-invariant} and \ref{lem:F=E}.

\if0
Only what we have to remark here is that for a given elliptic ruled surface $\PP (\mc{O}_E\oplus \mc{L})$
is always obtained as a quotient of $F\times\PP^1$ for some elliptic curve $F$ by the action considered in \eqref{eqn:action}.
 
Let us define $F$ to be the dual of 
$\hat{E}/\Span{\mc{L}}$. Then we obtain an isogeny 
$q\colon F\to E$ as the dual of the isogeny
$\hat{E}\to\hat{E}/\Span{\mc{L}}$.
Take an element $a\in F$ generating the cyclic group 
$\ker q(\cong \Z/m\Z)$, and consider the action on
$F\times \PP^1$ in \eqref{eqn:action}. Then
we obtain from \eqref{ali:FPm} that $S\cong S_i$ for some $i\in(\Z/m\Z)^*$. 
This is what we required.
\fi
\end{proof}

We are in a position to show Theorem \ref{thm:FMpartner}.\\

\noindent
{\it Proof of Theorem \ref{thm:FMpartner}.}
The condition $|\FM(S)|\ne 1$ implies that 
$S$ has an elliptic fibration  $\pi\colon S\to \PP^1$ (see \cite{BM01}).  
Hence, either of the cases (1-i), (1-ii) or (ii) in Theorem \ref{thm:TU14} occurs (recall that we work over $\C$).
In each case, we see from Remark \ref{rem:lambda} that $\lambda_S=1,m$ and $2$ respectively. 
It follows from Claim \ref{cla:lambda} that $S$ actually fits into the case (1-ii) with $m>4$. 
Now set
$S=\PP (\mc{O}_E\oplus \mc{L})$  
for some $\mc{L}\in{}_m\hat{E}$ for some $m>4$.
Then the assertion follows from Proposition \ref{prop:FM_P(O)}.
\qed\\


\section{Further questions}\label{sec:autoeq}
\subsection{Autoequivalences}
Let $S:=\PP(\mc{O}_E\oplus \mc{L})$ be an elliptic ruled surface 
with non-trivial Fourier--Mukai partners, where
$E$ is an elliptic curve, and $\mc{L}\in {}_mE$ for some $m>4$, as in Theorem \ref{thm:FMpartner}. 
Then, the group $H_S$ defined in \S \ref{subsec:bridgeland} coincides with the group $H_{\hat{E}}^{\mc{L}}$, by the results in \S 
\ref{subsec:proof}.
Note  that  there are no $(-2)$-curves on $S$, and hence no twist functors associated with $(-2)$-curves
appears in  $\Auteq   D(S)$.
Moreover, we can see that the $\PP^1$-bundle $f\colon S\to E$ has two sections $C_0$ and $C_1$,
and $mC_0$ and $mC_1$ are the multiple fibers of $\pi$. 
We can also check that 
$$
\Span{ \mathcal{O}_S(D)\mid  D\cdot F=0}=\Span{\mathcal{O}_S(C_0),\mathcal{O}_S(C_1) }
$$
in $\Pic (S)$, where $F$ is a smooth fiber of $\pi$.
Therefore, by the main theorem of \cite{Ue15},
we have the following short exact sequence:
\begin{align*}
1\to 
\Span{\otimes \mathcal{O}_S(C_0),\otimes\mathcal{O}_S(C_1)}
&\rtimes \Aut S \times \Z[2] \to
\Auteq   D(S) \notag\\
&\to
\bigl\{ \begin{pmatrix}
c& a\\
d& b   
\end{pmatrix}\in \Gamma_0(m) \bigm| b\in H_{\hat{E}}^{\mc{L}}                       \bigr\}
\to 1.
\end{align*}
Here for an integer $b$, coprime with $m$, we again denote by $b$ 
 the corresponding element in $H_{\hat{E}}^{\mc{L}}(\subset (\Z/m\Z)^*)$, 
and $\Gamma_0(m)$ is the congruence subgroup of $\SL (2,\Z)$, defined in \cite{Ue15}.

Since other ruled surfaces with an elliptic fibration has no non-trivial Fourier--Mukai partners,
the description of their autoequivalence groups is directly given by  \cite{Ue15}.

For elliptic ruled surfaces without elliptic fibrations, 
a description of the autoequivalence group will be given in the forthcoming paper  \cite{Ue}.

\subsection{Positive characteristic}
The proof of Theorem \ref{thm:FMpartner} does not work over positive characteristic fields.
We finish this section to raise the following:

\begin{prob}\label{prob:positive}
\begin{enumerate}
\item
In the notation of \S \ref{subsec:positive}, consider the case $e=-1$ and $p=2$. 
Study when $S$ has an elliptic fibration, and if it has,
study the singular fibers of the fibration.   
\item
Describe the set $\FM(S)$ for elliptic ruled surfaces $S$ over a positive characteristic field.
\end{enumerate}
\end{prob}

In \cite[Theorem 4]{Ma71}, Maruyama states that in the case $e=-1$ and $p=2$, $S$ ($\mathbf{P}_1$ in his notation)
has an elliptic fibration. 
But it seems to the author that he gave no proof of this statement. See also \cite[Remark 7]{Ma71}.

Furthermore, in the case (i-5) in Theorem \ref{thm:TU14}, if $p\ge 5$,
$S$ may have non-trivial Fourier--Mukai partners, since $\lambda_S=p$ (we omit the proof of this fact here).
It is also an interesting question to describe $\FM(S)$ in this case. \cite[Examples 4.7, 4.8]{KU85} fit into the case (i-5). 
To show  Theorem \ref{thm:FMpartner}, the equality \eqref{ali:FPm} was a key.
The author believes that the description in  \cite[Examples 4.7, 4.8]{KU85} should be useful to describe $\FM(S)$ for 
$S$ in the case (i-5).


\noindent
Department of Mathematics
and Information Sciences,
Tokyo Metropolitan University,
1-1 Minamiohsawa,
Hachioji-shi,
Tokyo,
192-0397,
Japan 

{\em e-mail address}\ : \  hokuto@tmu.ac.jp
\end{document}